\documentclass[12pt,reqno,a4paper]{amsart}

\newcommand\version{February 19, 2025}


\usepackage{amsmath,amsfonts,amsthm,amssymb,amsxtra}
\usepackage{hyperref}
\usepackage{bbm} 
\usepackage{bm} 
\usepackage{accents} 




\setlength{\voffset}{-.75truein}
\setlength{\textheight}{10.03truein}
\setlength{\textwidth}{6.62truein}
\setlength{\hoffset}{-.85truein}


\newtheorem{theorem}{Theorem}[section]
\newtheorem{proposition}[theorem]{Proposition}

\theoremstyle{definition}

\theoremstyle{remark}

\newtheorem{remarks}[theorem]{Remarks}


\numberwithin{equation}{section}


\newcommand{\C}{\mathbb{C}}

\newcommand{\D}{\mathcal{D}}

\renewcommand{\epsilon}{\varepsilon}

\renewcommand{\phi}{\varphi}
\newcommand{\R}{\mathbb{R}}

\DeclareMathOperator{\re}{Re}

\def\bs{\mathbb{S}}

\def\cl{\mathcal{L}}

\def\cq{\mathcal{Q}}

\def\Rd{{\mathbb{R}^d}}

\newcommand{\me}[1]{\mathrm{e}^{#1}}
\newcommand{\one}{\mathbf{1}}


\makeatletter
\newcommand*{\rom}[1]{\expandafter\@slowromancap\romannumeral #1@}
\makeatother





\begin{document}

\title[Subordinated Bessel heat kernels --- \version]{Subordinated Bessel heat kernels}

\author[K. Bogdan]{Krzysztof Bogdan}
\address[Krzysztof Bogdan]{Department of Pure and Applied Mathematics, Wroc\l aw University of Science and Technology, Hoene-Wro\'nskiego 13C, 50-376 Wroc\l aw, Poland}
\email{krzysztof.bogdan@pwr.edu.pl}

\author[K. Merz]{Konstantin Merz}
\address[Konstantin Merz]{Institut f\"ur Analysis und Algebra, Technische Universit\"at Braunschweig, Universit\"atsplatz 2, 38106 Braun\-schweig, Germany, and Department of Mathematics, Graduate School of Science, Osaka University, Toyonaka, Osaka 560-0043, Japan}
\email{k.merz@tu-bs.de}

\thanks{K.B. was supported through the DFG-NCN Beethoven Classic 3 programme, contract no. 2018/31/G/ST1/02252 (National Science Center, Poland) and SCHI-419/11–1 (DFG, Germany).
  K.M. was supported through the PRIME programme of the German Academic Exchange Service (DAAD) with funds from the German Federal Ministry of Education and Research (BMBF)}

\keywords{Bessel heat kernel, stable subordinator, 3G inequality}
\date{\version}

\begin{abstract}
    We prove new bounds for Bessel heat kernels and Bessel heat kernels subordinated by stable subordinators. In particular, we provide 3G inequalities in the subordinated case.
\end{abstract}

\maketitle

\section{Introduction and main result}

\label{s:semigroupproperties}

The Bessel differential operators
$-\tfrac{d^2}{dr^2}-\tfrac{2\zeta}{r}\,\tfrac{d}{dr}$ 
are of fundamental importance in mathematics, physics, finance, and engineering. They are used to describe heat and wave propagation under spherical symmetry.
The corresponding Bessel heat kernels are crucial for harmonic analysis and stochastic processes related to the Brownian motion, see, e.g., Revuz and Yor \cite{RevuzYor1999}. Our goal is to present estimates for the $\alpha/2$-subordinated Bessel heat kernels $p_\zeta^{(\alpha)}$, $\alpha\in(0,2]$. We are motivated by the appearance of $p_\zeta^{(\alpha)}$ in the ground state representation given in our work \cite{BogdanMerz2024}. In fact, the estimates are essential in our recent and forthcoming papers \cite{Bogdanetal2024,BogdanMerz2024H} concerning Schr\"odinger perturbations of $p_\zeta^{(\alpha)}(t,r,s)$ by Hardy potentials, but since they are of independent wide interest, we present them in this separate note.
For recent studies on the generator of $p_\zeta^{(\alpha)}$, which is just the $\tfrac{\alpha}{2}$-th fractional power of the Bessel operator \cite{BogdanMerz2024,BouzeffourGarayev2022}, we refer, e.g., to Betancor, Castro, and Stinga \cite{Betancoretal2014}, Galli, Molina, and Quintero \cite{Gallietal2022}, and Bouzeffour and Garayev \cite{BouzeffourGarayev2022}.

\smallskip
We now introduce our setting. For $\zeta\in(-1/2,\infty)$, we define the Bessel heat kernel
\begin{align}
  \label{eq:defpheatalpha2}
  \begin{split}
    p_\zeta^{(2)}(t,r,s) & : = \frac{(rs)^{1/2-\zeta}}{2t}\exp\left(-\frac{r^2+s^2}{4t}\right)I_{\zeta-1/2}\left(\frac{rs}{2t}\right), \quad r,s,t>0.
  \end{split}
\end{align}\index{$p_\zeta^{(2)}(t,r,s)$}Here and below, for $z\in\C\setminus(-\infty,0]$, $I_\nu(z)$ denotes the modified Bessel function of the first kind of order $\nu\in\C$ \cite[(10.25.2)]{NIST:DLMF}.\index{$I_\nu$}

The kernel $p_\zeta^{(2)}(t,r,s)$ with the reference (speed) measure $r^{2\zeta}dr$ on $\R_+$ is the transition density of the Bessel process of order $\zeta-1/2$ \emph{reflected at the origin}.
We remark that the Bessel process of order $\zeta-1/2$ \emph{killed at the origin} has the transition density given in \eqref{eq:defpheatalpha2} with $I_{\zeta-1/2}(\cdot)$ replaced with $I_{|\zeta-1/2|}(\cdot)$, see Borodin and Salminen \cite[Appendix~1.21, p.~133--134]{BorodinSalminen2002}. The distinction is superfluous when $\zeta\geq1/2$ because on the one hand $|\zeta-1/2|=\zeta-1/2$ and on the other hand the Bessel process does not hit the origin, so no conditions (reflecting or killing) are to be imposed at the origin. See also Malecki, Serafin, and {\.Z}{\'o}rawik \cite{Maleckietal2016}. Recall that $p_{(d-1)/2}^{(2)}(t,r,s)$ is the transition density of the radial part of the Brownian motion in $\R^d$ with the clock $2t$.
For further information on $p_\zeta^{(2)}$,
we refer, e.g., to the textbooks \cite[Part I, Section~IV.6 or Appendix~1.21]{BorodinSalminen2002} or \cite[Chapter~XI]{RevuzYor1999}.

\smallskip
We now define the $\tfrac\alpha2$-subordinated Bessel heat kernels for $\alpha\in(0,2)$. Recall that for $\alpha\in(0,2)$ and $t>0$, by Bernstein's theorem, the completely monotone function $[0,\infty)\ni\lambda\mapsto\me{-t\lambda^{\alpha/2}}$  is the Laplace transform of a probability density function $\R_+\ni\tau\mapsto\sigma_t^{(\alpha/2)}(\tau)$. Thus,
\begin{align}
  \label{eq:subordination}
  \me{-t\lambda^{\alpha/2}} = \int_0^\infty \me{-\tau\lambda}\,\sigma_t^{(\alpha/2)}(\tau)\,d\tau, \quad t>0,\,\lambda\geq0,
\end{align}
see, e.g., Schilling, Song, and Vondra\v{c}ek \cite[(1.4) and Chapter~5]{Schillingetal2012}.
In \cite[Appendix~B]{BogdanMerz2024} we list some useful properties of $\sigma_t^{(\alpha/2)}(\tau)$, sharp estimates for $\sigma_t^{(\alpha/2)}(\tau)$, and references. We define the $\tfrac\alpha2$-subordinated Bessel heat kernel with the reference measure $r^{2\zeta}dr$ on $\R_+$ as
\begin{align}
  \label{eq:defpheatalpha}
  \begin{split}
    p_\zeta^{(\alpha)}(t,r,s) & : = \int_0^\infty p_\zeta^{(2)}(\tau,r,s)\,\sigma_t^{(\alpha/2)}(\tau)\,d\tau, \quad r,s,t>0.
  \end{split}
\end{align}\index{$p_\zeta^{(\alpha)}(t,r,s)$}
We should note that more general subordination and references are discussed in Grzywny and Trojan \cite{GrzywnyTrojan2021} and \cite{Schillingetal2012}, but our main motivation for this study is the fact that $p_\zeta^{(\alpha)}$ arises when considering the $d$-dimensional fractional Laplacian $(-\Delta)^{\alpha/2}$ on the space of multiples of solid harmonics, i.e., functions of the form $[u]_{\ell,m}(x):=u(|x|)|x|^\ell Y_{\ell,m}(x/|x|)$, where $Y_{\ell,m}$ is a $L^2(\bs^{d-1})$-normalized spherical harmonic, $u\in L^2(\R_+,r^{2\zeta}dr)$, and $\zeta=(d-1+2\ell)/2$. Namely,
\begin{align}
  \label{eq:linksemigroupsrdrplus}
  \begin{split}
    \langle[u]_{\ell,m},\me{-t(-\Delta)^{\alpha/2}}(t,\cdot,\cdot)[u]_{\ell,m}\rangle_{L^2(\R^d)}
    & = \langle u, p_{(d-1+2\ell)/2}^{(\alpha)}(t,\cdot,\cdot)u\rangle_{L^2(\R_+,r^{2\zeta}dr)},
      \quad t>0.
  \end{split}
\end{align}
Furthermore, the following equality holds pointwise,
\begin{align}
  & p_{\frac{d-1+2\ell}{2}}^{(\alpha)}(t,r,s)
    = \iint\limits_{\bs^{d-1}\times\bs^{d-1}} \overline{[1]_{\ell,m}(r\omega_x)} [1]_{\ell,m}(s\omega_y) \me{-t(-\Delta)^{\alpha/2}}(t,r\omega_x,s\omega_y)\,d\omega_x\,d\omega_y, \quad r,s,t>0,
\end{align}
where $\me{-t(-\Delta)^{\alpha/2}}$ is the heat kernel of $(-\Delta)^{\alpha/2}$ on $\R^d$. See \cite{BogdanMerz2024} for details.

\subsection*{Organization and notation}
\label{s:organization}

In Section~\ref{s:elementaryproperties}, we recall and prove sharp upper and lower bounds for $p_\zeta^{(\alpha)}(t,r,s)$ (Theorem~\ref{heatkernelalpha1subordinatedboundsfinal}), discuss $p_\zeta^{(\alpha)}(t,r,s)$ as the probability transition density and the kernel of a strongly continuous contraction semigroup on $L^2(\R_+,r^{2\zeta}dr)$ (Proposition~\ref{strongcontinuity}), and recall an explicit expression for $p_\zeta^{(1)}(t,r,s)$ in \eqref{eq:heatkernell}.
In Section~\ref{s:3g}, we prove a 3G inequality when $\alpha\in(0,2)$ (Theorem~\ref{3gheatalpha}).
In Section~\ref{s:comparison}, we prove further pointwise bounds, called comparability results (Theorem~\ref{comparablealpha}).
The technical part of the proof of Theorem~\ref{heatkernelalpha1subordinatedboundsfinal}, when $\alpha\in (0,2)$, is given in Appendix~\ref{s:proofheatkernelalpha1subordinatedboundsfinal}.

\medskip
Below we denote generic constants, i.e., numbers in $(0,\infty)$ by $c$. The values of constants may change from place to place. We may mark the dependence of $c$ on some parameter $\tau$ by the notation $c_\tau$ or $c(\tau)$. For functions $f,g\geq0$, we write $f\lesssim g$ to indicate that there is a constant $c$ such that $f\leq c g$. If $c$ depends on $\tau$, we may write $f\lesssim_\tau g$.
The notation $f\sim g$ means that $f\lesssim g\lesssim f$; we say \emph{$f$ is comparable to $g$}.\index{$\lesssim$}\index{$\sim$} We abbreviate $a\wedge b:=\min\{a,b\}$ and $a\vee b:=\max\{a,b\}$.\index{$\wedge$}\index{$\vee$}
The regularized hypergeometric function \cite[(15.2.1)]{NIST:DLMF} is denoted by $_2\tilde F_1(a,b;c;z) :=\, _2F_1(a,b;c;z)/\Gamma(c)$\index{$_2\tilde F_1(a,b;c;z)$}, with $a,b,c\in\C$ and $z\in\{w\in\C:\,|w|<1\}$.
We introduce further notation as we proceed.

\subsection*{Acknowledgments.} We thank Volker Bach, Kamil Bogus, Jacek Dziubański, Tomasz Grzywny, Tomasz Jakubowski, Jacek Ma\l{}ecki, Haruya Mizutani, Adam Nowak, Marcin Preisner, and Grzegorz Serafin for discussion and references.

\section{Fundamental properties, bounds, and explicit expressions}
\label{s:elementaryproperties}

We recall the following properties of $p_\zeta^{(\alpha)}(t,r,s)$ for $\zeta\in(-1/2,\infty)$, $\alpha\in(0,2]$ proved in \cite[Section~2]{BogdanMerz2024}. For all $t,t',r,s>0$, we have $p_\zeta^{(\alpha)}(t,r,s)=p_\zeta^{(\alpha)}(t,s,r)>0$,
\begin{align}
  \label{eq:normalizedalpha}
  & \int_0^\infty p_\zeta^{(\alpha)}(t,r,s) s^{2\zeta}\,ds = 1, \\
  \label{eq:chapman}
  & \int_0^\infty p_\zeta^{(\alpha)}(t,r,z) p_\zeta^{(\alpha)}(t',z,s) z^{2\zeta}\,dz = p_\zeta^{(\alpha)}(t+t',r,s), \quad \text{and} \\
  \label{eq:scalingalpha}
  & p_\zeta^{(\alpha)}(t,r,s) = t^{-\frac{2\zeta+1}{\alpha}} p_\zeta^{(\alpha)}\left(1,\frac{r}{t^{1/\alpha}},\frac{s}{t^{1/\alpha}}\right).
\end{align}

Using the L\'evy distribution
\begin{align}
  \label{eq:subordinatorexplicitalpha1}
  \sigma_t^{(1/2)}(\tau)
  = \frac{1}{2\sqrt\pi}\cdot \frac{t}{\tau^{3/2}}\me{-t^2/(4\tau)},
  \quad t,\tau>0,
\end{align}
see, e.g., Stein and Weiss \cite[p.~6]{SteinWeiss1971}, it is possible to give an explicit expression for $p_\zeta^{(\alpha)}(t,r,s)$ in the physically important case $\alpha=1$. One obtains, for $\zeta\in(-1/2,\infty)$ and $r,s,t>0$,
\begin{align}
  \label{eq:heatkernell}
  \begin{split}
    p_\zeta^{(1)}(t,r,s)
    & = \frac{2\Gamma(\zeta+1)}{\sqrt\pi} \cdot \frac{t}{\left(r^2+s^2+t^2\right)^{\zeta+1}} \\
      & \quad \times\, _2\tilde{F}_1\left(\frac{\zeta+1}{2},\frac{\zeta+2}{2};\zeta+\frac{1}{2};\frac{4r^2s^2}{\left(r^2+s^2+t^2\right)^2}\right),
  \end{split}
\end{align}
see, e.g., Betancor, Harboure, Nowak, and Viviani \cite[p.~136]{Betancoretal2010} for a computation. In particular,
\begin{subequations}
  \label{eq:heatkernell0}
  \begin{align}
    p_0^{(1)}(t,r,s) & = \frac{2 t}{\pi  \left(r^2+s^2+t^2\right) \left(1- 4 r^2 s^2 \cdot \left(r^2+s^2+t^2\right)^{-2}\right)}, \\
    p_1^{(1)}(t,r,s) & = \frac{4}{\pi}\, \frac{t}{(r^2-s^2)^2+t^2(t^2+2r^2+2s^2)}.
  \end{align}
\end{subequations}
Since explicit expressions for the $\alpha/2$-stable subordination density with rational $\alpha/2$ (see, e.g., Penson and G\'orska \cite{PensonGorska2010}) are available, one could also compute $p_\zeta^{(\alpha)}(t,r,s)$ for such $\alpha$, but  
the resulting expressions are rather involved when $\alpha\neq1$.
Using bounds for hypergeometric functions, one obtains the following upper and lower bounds 
\begin{align}
  p_\zeta^{(1)}(t,r,s) \sim_\zeta \frac{t}{(r^2+s^2+t^2)^\zeta [(r-s)^2+t^2]},
\end{align}
see, e.g., \cite[Proposition~6.1]{Betancoretal2010}.
In the following theorem, we give sharp upper and lower bounds for $p_\zeta^{(\alpha)}(t,r,s)$ and all $\alpha\in(0,2)$.
We remark that the case of $\alpha=1$ in Theorem~\ref{heatkernelalpha1subordinatedboundsfinal} is resolved in \cite[Proposition~6.1]{Betancoretal2010}; see also Dziuba\'{n}ski and Preisner \cite[Proposition~6]{DziubanskiPreisner2018} and \cite{Betancoretal2014}. The case of general $\alpha\in (0,2)$ seems unknown, although similar estimates were obtained by analogous techniques in various settings, see, e.g., Bogdan, St\'{o}s, and Sztonyk \cite[Theorem~3.1]{Bogdanetal2003H} for $\Rd$ and unbounded fractals; see also remarks in the proof below.

\begin{theorem}
  \label{heatkernelalpha1subordinatedboundsfinal}
  Let $\zeta\in(-1/2,\infty)$.
  Then, there are $c,c'>0$ such that
  \begin{subequations}
    \label{eq:easybounds2}
    \begin{align}
      p_\zeta^{(2)}(t,r,s)
      \label{eq:easybounds2a}
      & \asymp_\zeta t^{-\frac12}\frac{\exp\left(-\frac{(r-s)^2}{c t}\right)}{(rs+t)^{\zeta}} \\
      \label{eq:easybounds2b}
      & \asymp_\zeta \left(1\wedge\frac{r}{t^{1/2}}\right)^{\zeta}\, \left(1\wedge\frac{s}{t^{1/2}}\right)^{\zeta}\,\left(\frac{1}{rs}\right)^{\zeta} \cdot t^{-\frac12} \cdot \exp\left(-\frac{(r-s)^2}{c' t}\right),
    \end{align}
  \end{subequations}\index{$\asymp$}for all $r,s,t>0$. Moreover, for all $\alpha\in(0,2)$ and all $r,s,t>0$,
  \begin{align}
    \label{eq:heatkernelalpha1weightedsubordinatedboundsfinal}
    \begin{split}
      p_\zeta^{(\alpha)}(t,r,s)
      & \sim_{\zeta,\alpha} \frac{t}{|r-s|^{1+\alpha}(r+s)^{2\zeta} + t^{\frac{1+\alpha}{\alpha}}(t^{\frac1\alpha}+r+s)^{2\zeta}}.
    \end{split}
  \end{align}
\end{theorem}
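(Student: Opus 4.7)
My plan is to establish the three bounds in sequence: first the Gaussian estimates \eqref{eq:easybounds2a}--\eqref{eq:easybounds2b} for the Bessel heat kernel via classical asymptotics of $I_\nu$, and then \eqref{eq:heatkernelalpha1weightedsubordinatedboundsfinal} by analyzing the subordination integral in \eqref{eq:defpheatalpha}.

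For \eqref{eq:easybounds2a} I would substitute the uniform two-sided bound $I_\nu(z)\asymp_\nu z^\nu(1+z)^{-\nu-1/2}\me{z}$, valid for $\nu>-1$ and $z>0$, into \eqref{eq:defpheatalpha2} with $\nu=\zeta-1/2$ and $z=rs/(2t)$. The factor $\me{z}$ combines with $\exp(-(r^2+s^2)/(4t))$ to form the Gaussian $\exp(-(r-s)^2/(4t))$, while $(rs)^{1/2-\zeta}\cdot z^{\zeta-1/2}$ collapses to a pure power of $t$ and $(1+z)^{-\zeta}\asymp(t/(rs+t))^\zeta$; collecting factors produces the prefactor $t^{-1/2}(rs+t)^{-\zeta}$. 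For \eqref{eq:easybounds2b} I would verify the algebraic equivalence
\begin{align*}
(rs+t)^{-\zeta}\,\asymp_\zeta\,\left(1\wedge\tfrac{r}{\sqrt t}\right)^\zeta\left(1\wedge\tfrac{s}{\sqrt t}\right)^\zeta(rs)^{-\zeta}
\end{align*}
by a short case analysis on the ordering of $r,s,\sqrt t$. The polynomial mismatch that arises in asymmetric regimes (e.g., $r\gg\sqrt t\gg s$ with $rs\ll t$) is at worst a power of $r/\sqrt t$; since $|r-s|\sim r$ there, such a factor is absorbed by replacing the Gaussian constant $c$ by a suitable $c'\geq c$, which is precisely why the two displays employ distinct exponential constants.

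For \eqref{eq:heatkernelalpha1weightedsubordinatedboundsfinal} I would feed \eqref{eq:easybounds2a} into \eqref{eq:defpheatalpha} and use the sharp two-sided bounds on $\sigma_t^{(\alpha/2)}(\tau)$ recalled in \cite[Appendix~B]{BogdanMerz2023}: polynomial tail $\sigma_t^{(\alpha/2)}(\tau)\asymp t\tau^{-1-\alpha/2}$ for $\tau\gtrsim t^{2/\alpha}$ and stretched-exponential decay as $\tau\to 0^+$. Two scales compete in the integrand: the Gaussian in $p_\zeta^{(2)}$ localizes $\tau$ near $|r-s|^2$, while the subordinator tail localizes $\tau$ near $t^{2/\alpha}$. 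For the upper bound I would split the integral at $\tau_0:=|r-s|^2\vee t^{2/\alpha}$; the contribution from $\tau<\tau_0$ is controlled by the stretched-exponential bound on $\sigma_t^{(\alpha/2)}$ when $\tau_0=t^{2/\alpha}$ and by the Gaussian decay of $p_\zeta^{(2)}$ when $\tau_0=|r-s|^2$, while the tail reduces to the polynomial computation
\begin{align*}
\int_{\tau_0}^\infty \tau^{-\tfrac12}(rs+\tau)^{-\zeta}\cdot t\,\tau^{-1-\alpha/2}\,d\tau,
\end{align*}
which evaluates up to constants to $t\,\tau_0^{-(1+\alpha)/2}(rs+\tau_0)^{-\zeta}$. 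Using the identity $rs+\tau_0\asymp(r+s+\sqrt{\tau_0})^2$ (valid because $|r-s|\leq r+s$) together with $r+s+\sqrt{\tau_0}\asymp r+s+t^{1/\alpha}$ in each of the two regimes $\tau_0=t^{2/\alpha}$ (trivial) and $\tau_0=|r-s|^2$ (where $r+s\geq|r-s|\geq t^{1/\alpha}$), this matches the stated denominator $|r-s|^{1+\alpha}(r+s)^{2\zeta}+t^{(1+\alpha)/\alpha}(t^{1/\alpha}+r+s)^{2\zeta}$: on-diagonally only the second summand contributes, off-diagonally the first dominates. The matching lower bound comes by restricting the integration to an $O(1)$-neighborhood of $\tau_0$ on which both the Gaussian factor in $p_\zeta^{(2)}$ and the power-law piece of $\sigma_t^{(\alpha/2)}$ are simultaneously of the right order.

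The main obstacle is the bookkeeping. The sign of $\zeta$ flips the monotonicity of $\tau\mapsto(rs+\tau)^{-\zeta}$, so for $\zeta<0$ the tail integral must be split further at $\tau\sim rs$; moreover $rs$ and $(r+s)^2$ are not comparable when one of $r,s$ dominates, so identifying $(rs+\tau_0)^{-\zeta}$ with the stated factors needs care. The small-$\tau$ part requires the precise stretched-exponential bound on $\sigma_t^{(\alpha/2)}$ rather than just the polynomial envelope, and the lower bound requires exhibiting a single subinterval of $\tau$ on which all pieces are simultaneously of optimal order. This strategy parallels Bogdan--St\'os--Sztonyk \cite[Theorem~3.1]{Bogdanetal2003H} in the $\Rd$ setting, and the full case analysis is what the authors defer to Appendix~\ref{s:proofheatkernelalpha1subordinatedboundsfinal}.
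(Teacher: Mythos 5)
Your proposal is correct and follows the same overall pipeline as the paper: classical $I_\nu$-asymptotics yield \eqref{eq:easybounds2a}, and then one substitutes \eqref{eq:easybounds2a} together with the two-sided bounds for the stable density $\sigma_t^{(\alpha/2)}$ into the subordination formula \eqref{eq:defpheatalpha} and estimates the resulting $\tau$-integral. Two genuine differences in route are worth flagging. For \eqref{eq:easybounds2b} the paper simply cites \cite[Theorem~10]{FrankMerz2023}, whereas you give a self-contained derivation from \eqref{eq:easybounds2a} by observing that the only mismatch between $(rs+t)^{-\zeta}$ and $(1\wedge r/\sqrt t)^\zeta(1\wedge s/\sqrt t)^\zeta(rs)^{-\zeta}$ occurs in asymmetric regimes such as $r\gg\sqrt t\gg s$, where $|r-s|\sim r$ and the discrepant power $(r/\sqrt t)^{\pm|\zeta|}$ is absorbed by weakening the Gaussian constant; that argument is sound. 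For the lower bound in \eqref{eq:heatkernelalpha1weightedsubordinatedboundsfinal}, you restrict the $\tau$-integral to a single multiplicative window around $\tau_0:=|r-s|^2\vee t^{2/\alpha}$. Since $\tau_0\ge t^{2/\alpha}$, the stretched-exponential factor of $\sigma_t^{(\alpha/2)}$ is $\asymp 1$ on that window, and since $\tau_0\ge(r-s)^2$ the Gaussian factor in $p_\zeta^{(2)}(\tau,r,s)$ is also $\asymp 1$, so everything contributes at unit order. This is a cleaner organization than the paper's Appendix~\ref{s:proofheatkernelalpha1subordinatedboundsfinal}, which (after scaling to $t=1$) runs a four-way case analysis on $rs\lessgtr1$, $(r-s)^2\lessgtr1$, and for the lower bounds a further subcase split on $c_1=\alpha/(2-\alpha)\lessgtr1$, first establishing the intermediate form \eqref{eq:heatkernelalpha1subordinatedboundsfinalaux1} before passing to the clean denominator. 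Your upper-bound side and the handling of $\zeta<0$ (splitting the tail at $\tau\sim rs$) involve essentially the same computations as the appendix, so there is no shortcut there, but the single-window lower-bound argument genuinely compresses the bookkeeping.
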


Here and below the notation $\asymp_\zeta$ combines an upper bound and a lower bound similarly as $\sim_\zeta$, but the displayed constants in exponential factors (i.e., the constant $c$ in \eqref{eq:easybounds2a} and $c'$ in \eqref{eq:easybounds2b}) may be different in the upper and the lower bounds. Furthermore, as suggested by the notation $\sim_\zeta$, we allow the constants in the exponential factors to depend on $\zeta$, too. Thus, for instance, \eqref{eq:easybounds2a} is equivalent to the statement that there are $c_{j,\zeta}$, $j\in\{1,2,3,4\}$ such that
\begin{align}
  \begin{split}
    c_{1,\zeta} t^{-\frac12}\frac{\exp\left(-\frac{(r-s)^2}{c_{2,\zeta} t}\right)}{(rs+t)^{\zeta}}
    \leq p_\zeta^{(2)}(t,r,s)
    \leq c_{3,\zeta} t^{-\frac12}\frac{\exp\left(-\frac{(r-s)^2}{c_{4,\zeta} t}\right)}{(rs+t)^{\zeta}}.
  \end{split}
\end{align}

\begin{proof}[Proof of Theorem~\ref{heatkernelalpha1subordinatedboundsfinal}]
  For $\alpha=2$, the two-sided estimates in \eqref{eq:easybounds2a} follow from the following asymptotics \cite[(10.30.1), (10.30.4)]{NIST:DLMF},
  \begin{align}
    I_\rho(z)
    \sim \frac{1}{\Gamma(\rho+1)}\cdot \left(\frac{z}{2}\right)^\rho \one_{z\leq1} + \frac{\me{z}}{\sqrt{2\pi z}}\one_{z\geq1}, \quad z\geq0,\,\rho\notin\{-1,-2,...\}.
  \end{align}
  The two-sided estimates in \eqref{eq:easybounds2b} were proved, e.g., in Frank and Merz \cite[Theorem~10]{FrankMerz2023}.
  
The upper bound in \eqref{eq:heatkernelalpha1weightedsubordinatedboundsfinal} for $\alpha<2$ can be deduced from 
\cite[Corollary~3.8]{GrzywnyTrojan2021}. Private communication with the authors of that remarkable work indicates that their arguments should, however, be modified to prove the lower bound in \eqref{eq:heatkernelalpha1weightedsubordinatedboundsfinal}. So, for the sake of completeness, in Appendix~\ref{s:proofheatkernelalpha1subordinatedboundsfinal} below, we give a self-contained proof of the two-sided estimates in \eqref{eq:heatkernelalpha1weightedsubordinatedboundsfinal} for $\alpha\in (0,2)$.
\end{proof}

Using the pointwise bounds in Theorem~\ref{heatkernelalpha1subordinatedboundsfinal}, we show that $p_\zeta^{(\alpha)}$ is a strongly continuous contraction semigroup on $L^2(\R_+,r^{2\zeta}dr)$.

\begin{proposition}
  \label{strongcontinuity}
  Let $\zeta\in(-1/2,\infty)$ and $\alpha\in(0,2]$.
  Then, $\{p_\zeta^{(\alpha)}(t,\cdot,\cdot)\}_{t>0}$ is a strongly continuous contraction semigroup on $L^2(\R_+,r^{2\zeta}dr)$.
\end{proposition}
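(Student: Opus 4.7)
The plan is to verify the three semigroup axioms in turn, deferring the only serious point --- strong continuity at $t=0^+$ --- to the end. Throughout, let $T_t^{(\alpha)}$ denote the integral operator on $L^2(\R_+,r^{2\zeta}\,dr)$ with kernel $p_\zeta^{(\alpha)}(t,\cdot,\cdot)$ against the reference measure $s^{2\zeta}\,ds$. The semigroup identity $T_t^{(\alpha)} T_{t'}^{(\alpha)}=T_{t+t'}^{(\alpha)}$ is just a restatement of the Chapman--Kolmogorov relation \eqref{eq:chapman}.

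For the contraction bound, the decisive ingredients are symmetry, positivity, and the Markov normalization \eqref{eq:normalizedalpha}, which jointly say that for each $r,t>0$, the measure $p_\zeta^{(\alpha)}(t,r,s)\,s^{2\zeta}\,ds$ is a probability measure on $\R_+$. Jensen's inequality applied pointwise in $r$ then yields $|T_t^{(\alpha)}f(r)|^2\leq\int_0^\infty p_\zeta^{(\alpha)}(t,r,s)|f(s)|^2 s^{2\zeta}\,ds$; integrating against $r^{2\zeta}\,dr$ and swapping the order of integration (using the symmetry of the kernel and \eqref{eq:normalizedalpha} once more) gives $\|T_t^{(\alpha)}f\|_{L^2(\R_+,r^{2\zeta}dr)}\leq\|f\|_{L^2(\R_+,r^{2\zeta}dr)}$.

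Strong continuity, namely $\|T_t^{(\alpha)}f-f\|_{L^2}\to0$ as $t\to0^+$, is the substantive point. By contractivity and a density / $\varepsilon$-approximation argument, it suffices to treat $f\in C_c((0,\infty))$. For $\alpha=2$, I would combine $T_t^{(2)}f(r)-f(r)=\int_0^\infty p_\zeta^{(2)}(t,r,s)(f(s)-f(r))\,s^{2\zeta}\,ds$ with Cauchy--Schwarz and \eqref{eq:normalizedalpha} to obtain
\[
\|T_t^{(2)}f-f\|_{L^2}^2 \;\leq\; \iint_{\R_+\times\R_+} p_\zeta^{(2)}(t,r,s)\,|f(s)-f(r)|^2\,r^{2\zeta}s^{2\zeta}\,ds\,dr.
\]
Since $|f(s)-f(r)|^2$ vanishes unless $r$ or $s$ belongs to $\supp f$, compact support confines the effective $r$-integration to a bounded interval. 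I would then split the inner $s$-integral at $|r-s|<\delta$ and $|r-s|\geq\delta$: on the short-distance part, uniform continuity of $f$ contributes a factor $\omega_f(\delta)^2$ that is absorbed by the unit total mass \eqref{eq:normalizedalpha}; on the long-distance part, the sharp Gaussian bound \eqref{eq:easybounds2a} forces $\int_{|r-s|\geq\delta}p_\zeta^{(2)}(t,r,s)s^{2\zeta}\,ds\to0$ as $t\to0^+$, uniformly for $r$ in compacts.

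For $\alpha\in(0,2)$, I would bootstrap from the $\alpha=2$ case via the subordination formula \eqref{eq:defpheatalpha}. Fubini (justified by the Markov normalization and $f\in L^\infty$ with compact support) gives $T_t^{(\alpha)}f=\int_0^\infty (T_\tau^{(2)}f)\,\sigma_t^{(\alpha/2)}(\tau)\,d\tau$; subtracting $f=\int_0^\infty f\,\sigma_t^{(\alpha/2)}(\tau)\,d\tau$ and invoking Minkowski's integral inequality leads to
\[
\|T_t^{(\alpha)}f-f\|_{L^2} \;\leq\; \int_0^\infty \|T_\tau^{(2)}f-f\|_{L^2}\,\sigma_t^{(\alpha/2)}(\tau)\,d\tau.
\]
The integrand is bounded by $2\|f\|_{L^2}$ by contractivity and tends to $0$ as $\tau\to0^+$ by the $\alpha=2$ case; Bernstein's formula \eqref{eq:subordination} forces $\sigma_t^{(\alpha/2)}\to\delta_0$ weakly as $t\to0^+$, so a standard splitting of the $\tau$-integral at a small threshold finishes the job. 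The main technical obstacle is therefore the diagonal-concentration estimate in the $\alpha=2$ case, which is exactly where the sharp bounds of Theorem~\ref{heatkernelalpha1subordinatedboundsfinal} are used; everything else is a general and essentially formal consequence of symmetry, probability normalization, and subordination.
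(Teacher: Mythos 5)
Your argument is correct, and the route to strong continuity is genuinely different from the paper's. The paper proves contractivity by the same symmetry--normalization--Schur mechanism, but for strong continuity it avoids Cauchy--Schwarz and $\delta$-splitting entirely: it expands $\|T_t^{(\alpha)}\phi-\phi\|^2$, uses the Chapman--Kolmogorov identity \eqref{eq:chapman} to write $\|T_t^{(\alpha)}\phi\|^2 = \langle T_{2t}^{(\alpha)}\phi,\phi\rangle$, and then applies dominated convergence directly to the resulting double integrals $\iint\phi(r)\phi(s)p_\zeta^{(\alpha)}(\cdot,r,s)\,r^{2\zeta}s^{2\zeta}\,dr\,ds$ (treating all $\alpha\in(0,2]$ at once via the bounds of Theorem~\ref{heatkernelalpha1subordinatedboundsfinal}). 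Your version -- expand the difference pointwise, invoke Jensen (equivalently Cauchy--Schwarz against the probability kernel), split at $|r-s|\lessgtr\delta$, and use uniform continuity plus the Gaussian tail estimate -- is more elementary and more explicitly quantitative; it also shows where each hypothesis enters. The subordination bootstrap for $\alpha<2$ is a nice economy: it reuses only the $\alpha=2$ conclusion and the weak convergence $\sigma_t^{(\alpha/2)}\to\delta_0$, rather than reusing the kernel estimates in the form \eqref{eq:heatkernelalpha1weightedsubordinatedboundsfinal}. One small inaccuracy to fix: the claim that compact support ``confines the effective $r$-integration to a bounded interval'' is not literally true on the region $|r-s|\geq\delta$, since one could have $s\in\supp f$ but $r$ arbitrarily large. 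What saves the estimate is symmetry: bound $|f(s)-f(r)|^2\leq 2(|f(r)|^2+|f(s)|^2)$, use the symmetry of $p_\zeta^{(\alpha)}(t,r,s)\,r^{2\zeta}s^{2\zeta}\,dr\,ds$ to reduce to the case $r\in\supp f$, and then your uniform-in-$r$ tail estimate $\int_{|r-s|\geq\delta}p_\zeta^{(2)}(t,r,s)s^{2\zeta}\,ds\to0$ applies over the compact set $\supp f\subset(0,\infty)$ (which is also where the lower bound $r\geq a>0$ is used to control the factor $(rs+t)^{-\zeta}$ in \eqref{eq:easybounds2a}, in particular when $\zeta<0$).
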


\begin{proof}
  By symmetry and the normalization \eqref{eq:normalizedalpha}, $p_\zeta^{(\alpha)}(t,\cdot,\cdot)$ defines a contraction on $L^2(\R_+,r^{2\zeta}dr)$ for every $t>0$ as a consequence of a Schur test\footnote{Recall that the Schur test for an integral operator $T$ with a non-negative integral kernel $K(x,y)$, $x\in X$, $y\in Y$ with $X,Y$ being two measurable spaces yields the operator norm bound $\|T\|_{L^2(X)\to L^2(Y)}\leq\sqrt{\alpha\beta}$, whenever $\sup_{x\in X}\int_Y dy\, K(x,y)\leq\alpha$ and $\sup_{y\in Y}\int_X dx\, K(x,y)\leq\beta$.}.
  
  To prove the strong continuity of $p_{\zeta}^{(\alpha)}(t,\cdot,\cdot)$, it suffices, by the density of $C_c^\infty(\R_+)$ in $L^2(\R_+,r^{2\zeta}dr)$, to show
  \begin{align}
    \label{eq:strongcontinuityaux}
    \lim_{t\searrow0} \int_0^\infty dr\, r^{2\zeta} \left|\int_0^\infty ds\, s^{2\zeta} p_\zeta^{(\alpha)}(t,r,s)\phi(s) - \phi(r)\right|^2 = 0
  \end{align}
  for every non-negative function $\phi\in C_c^\infty(\R_+)$.
  To that end, we use Jensen's inequality (in view of \eqref{eq:normalizedalpha}), i.e.,
  \begin{align*}
    \int_0^\infty dr\, r^{2\zeta}\left|\int_0^\infty ds\, s^{2\zeta} p_\zeta^{(\alpha)}(t,r,s)(\phi(s)-\phi(r))\right|^2
    \leq \int_0^\infty dr\, r^{2\zeta} \int_0^\infty ds\, s^{2\zeta}p_\zeta^{(\alpha)}(t,r,s)|\phi(s)-\phi(r)|^2
  \end{align*}
  and show that the right-hand side vanishes as $t\searrow0$.
  To that end, let $c:=c_\phi>0$ be such that $|\phi(r)|\lesssim_\phi\one_{r<c}$. Then, we consider four integrals over the regions
  $\{(r,s)\in\R_+\times\R_+:\, r\vee s\leq 2c_\phi\}$,
  $\{(r,s)\in\R_+\times\R_+:\, s\leq c_\phi\,, r\geq 2c_\phi\}$,
  $\{(r,s)\in\R_+\times\R_+:\, r\leq c_\phi,\, s\geq 2c_\phi\}$, and
  $\{(r,s)\in\R_+\times\R_+:\, r\wedge s\geq 2c_\phi\}$.
  Note that due to the compact support of $\phi$ and the symmetry of $p_\zeta^{(\alpha)}(t,r,s)$, it suffices to consider only the integrals over the first two sets. Suppose now $\alpha<2$. Then, by \eqref{eq:heatkernelalpha1weightedsubordinatedboundsfinal}, we obtain
  \begin{align*}
    \int_0^{2c} dr \int_0^{2c} ds\, (rs)^{2\zeta}p_\zeta^{(\alpha)}(t,r,s)|\phi(r)-\phi(s)|^2
    \lesssim_\phi t\int_0^{2c} dr \int_0^{2c} ds\, (rs)^{2\zeta} \frac{|r-s|^{1-\alpha}}{(r+s)^{2\zeta}}
    \lesssim_{\phi,\zeta,\alpha} t
  \end{align*}
  and
  \begin{align*}
    \int_{2c}^\infty dr \int_0^c ds\, (rs)^{2\zeta}p_\zeta^{(\alpha)}(t,r,s)|\phi(r)-\phi(s)|^2
    \lesssim_\phi t\int_{2c}^\infty dr \int_0^c ds\, \frac{(rs)^{2\zeta}}{|r-s|^{1+\alpha}(r+s)^{2\zeta}}
    \lesssim_{\phi,\zeta,\alpha}t.
  \end{align*}
  This shows \eqref{eq:strongcontinuityaux} for $\alpha<2$.
  Using \eqref{eq:easybounds2} instead of \eqref{eq:heatkernelalpha1weightedsubordinatedboundsfinal}, the case $\alpha=2$ is treated analogously.
  This concludes the proof.
\end{proof}

\section{3G inequality for $\alpha\in(0,2)$}
\label{s:3g}

The following 3G inequality for $p_\zeta^{(\alpha)}(t,r,s)$ is motivated and similar to that of Bogdan and Jakubowski \cite[(7)--(9)]{BogdanJakubowski2007}, which was important to study the heat kernels associated to fractional Laplacians with Hardy potentials in \cite{Bogdanetal2019,JakubowskiWang2020}.

\begin{theorem}
  \label{3gheatalpha}
  Let $\zeta\in[0,\infty)$, $\alpha\in(0,2)$. Then, for all $r,s,z,t,\tau>0$, we have
  \begin{align}
    \label{eq:3gheatalpha1}
    \begin{split}
      & \min\left\{ \! \left(\!\frac{r+z}{r+s+z}\!\right)^{2\zeta} p_\zeta^{(\alpha)}(t,r,z), \left(\!\frac{s+z}{r+s+z}\!\right)^{2\zeta} p_\zeta^{(\alpha)}(\tau,z,s) \! \right\}
        \!\lesssim_{\zeta,\alpha}\! p_\zeta^{(\alpha)}(t+\tau,r,s),
    \end{split}
  \end{align}
  and
  \begin{align}
    \label{eq:3gheatalpha2}
    & p_\zeta^{(\alpha)}(t,r,z)\cdot p_\zeta^{(\alpha)}(\tau,z,s) \\
    \notag
    & \ \lesssim_{\zeta,\alpha} p_\zeta^{(\alpha)}(t+\tau,r,s) \left[\! \left(\!\frac{r+s+z}{s+z}\!\right)^{2\zeta} p_\zeta^{(\alpha)}(t,r,z) + \left(\!\frac{r+s+z}{r+z}\!\right)^{2\zeta} p_\zeta^{(\alpha)}(\tau,z,s) \! \right]\!.
  \end{align}
\end{theorem}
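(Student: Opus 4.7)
The strategy is to work with the sharp pointwise bound \eqref{eq:heatkernelalpha1weightedsubordinatedboundsfinal}, which a short calculation (using $(a+b)^p\sim a^p+b^p$ for $a,b\geq 0$ and $p\geq 0$) shows to be equivalent to the compact form
\[
p_\zeta^{(\alpha)}(t,r,s)\,\sim_{\zeta,\alpha}\,\frac{t}{\bigl(t^{1/\alpha}+|r-s|\bigr)^{1+\alpha}\bigl(t^{1/\alpha}+r+s\bigr)^{2\zeta}},\qquad r,s,t>0,
\]
and thus also to $p_\zeta^{(\alpha)}(t,r,s)\sim\min\{t^{-1/\alpha}(t^{1/\alpha}+r+s)^{-2\zeta},\,t|r-s|^{-1-\alpha}(r+s)^{-2\zeta}\}$. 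In particular, the unconditional upper bound $p_\zeta^{(\alpha)}(t,r,z)\lesssim t^{-1/\alpha}(t^{1/\alpha}+r+z)^{-2\zeta}$ holds for all $r,z,t>0$. The claim \eqref{eq:3gheatalpha1} is symmetric under $(r,t)\leftrightarrow(s,\tau)$, which swaps the two entries of the minimum; I will use this symmetry to halve the work below.

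For \eqref{eq:3gheatalpha1} I split according to whether $|r-s|\leq(t+\tau)^{1/\alpha}$ (\emph{case (i)}, RHS in the diagonal regime) or $|r-s|\geq(t+\tau)^{1/\alpha}$ (\emph{case (ii)}, RHS in the off-diagonal regime). In case (i), assume WLOG $t\geq\tau$, so that $t\sim t+\tau$, and apply the unconditional upper bound above to the first entry of the minimum. Since $\zeta\geq 0$, the desired inequality reduces to the pointwise algebraic estimate
\[
(r+z)\bigl(t^{1/\alpha}+r+s\bigr)\,\leq\,(r+s+z)\bigl(t^{1/\alpha}+r+z\bigr),
\]
which is equivalent to the manifest $0\leq s\,t^{1/\alpha}+rz+z^2$. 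In case (ii) the triangle inequality yields $\max\{|r-z|,|z-s|\}\geq|r-s|/2$; by the above symmetry one may assume $|r-z|\geq|r-s|/2$. Then $(t^{1/\alpha}+|r-z|)^{1+\alpha}\gtrsim|r-s|^{1+\alpha}$, the weight factor $(r+z)^{2\zeta}$ is absorbed by $(t^{1/\alpha}+r+z)^{2\zeta}$ in the denominator, and combining $r+s\leq r+s+z$ with $t\leq t+\tau$ bounds the first entry of the minimum by $(t+\tau)/((r+s)^{2\zeta}|r-s|^{1+\alpha})\sim p_\zeta^{(\alpha)}(t+\tau,r,s)$.

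Inequality \eqref{eq:3gheatalpha2} will then follow as a formal consequence. Writing $A:=p_\zeta^{(\alpha)}(t,r,z)$, $B:=p_\zeta^{(\alpha)}(\tau,z,s)$, $D:=p_\zeta^{(\alpha)}(t+\tau,r,s)$, and $W_1,W_2$ for the two weight factors of \eqref{eq:3gheatalpha1}, the bound just proved reads $\min\{W_1A,W_2B\}\lesssim D$. Equivalently $1/D\lesssim\max\{1/(W_1A),1/(W_2B)\}\leq 1/(W_1A)+1/(W_2B)$, and multiplying through by $AB$ gives $AB\lesssim D[B/W_1+A/W_2]$, which is precisely \eqref{eq:3gheatalpha2} since $W_1^{-1}=((r+s+z)/(r+z))^{2\zeta}$ and $W_2^{-1}=((r+s+z)/(s+z))^{2\zeta}$.

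The main subtlety I anticipate is matching each weight factor with the appropriate regime of the kernel bound: the diagonal form in case (i) lets the weight participate in a clean polynomial inequality, while in case (ii) the weight must cancel the $(r+z)^{2\zeta}$ arising naturally in the off-diagonal form. The assumption $\zeta\geq 0$ enters only through the monotonicity $a^{2\zeta}\leq b^{2\zeta}$ for $0\leq a\leq b$.
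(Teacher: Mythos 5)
Your proof is correct, and it is cleaner than the one in the paper. Both proofs rest on the two-sided kernel bound \eqref{eq:heatkernelalpha1weightedsubordinatedboundsfinal} and derive \eqref{eq:3gheatalpha2} from \eqref{eq:3gheatalpha1} by the standard algebraic manipulation (the paper states this step without detail; you supply it). The genuine difference is in the proof of \eqref{eq:3gheatalpha1}. The paper introduces an auxiliary weight function $f(r,s,z)$, assumes WLOG $r>s$, decomposes the left-hand side into a minimum of four terms coming from the off-diagonal and on-diagonal forms of the kernel, and then bounds the off-diagonal pair and the on-diagonal pair by the corresponding form of $p_\zeta^{(\alpha)}(t+\tau,r,s)$, with a further subcase analysis $z\lessgtr r$ inside the on-diagonal estimate. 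You instead split by the regime of the target kernel $p_\zeta^{(\alpha)}(t+\tau,r,s)$ — diagonal when $|r-s|\le(t+\tau)^{1/\alpha}$, off-diagonal otherwise — and in each regime choose the appropriate entry of the minimum via a single symmetry reduction (WLOG $t\ge\tau$ in the diagonal case, WLOG $|r-z|\ge|r-s|/2$ in the off-diagonal case). Your diagonal case then collapses to the single scalar inequality $(r+z)(t^{1/\alpha}+r+s)\le(r+s+z)(t^{1/\alpha}+r+z)$, which replaces the paper's multi-step chain \eqref{eq:3gheatalphaaux3}--\eqref{eq:3gheatalphaaux6}. Your off-diagonal case coincides in substance with the paper's \eqref{eq:3gheatalphaaux2} (both rely on $\max\{|r-z|,|s-z|\}\ge|r-s|/2$). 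The two approaches are dual — one bounds each piece of the left-hand side, the other targets each piece of the right-hand side — but yours has fewer case distinctions and avoids the auxiliary function $f$. The compact reformulation $p_\zeta^{(\alpha)}(t,r,s)\sim t\,(t^{1/\alpha}+|r-s|)^{-1-\alpha}(t^{1/\alpha}+r+s)^{-2\zeta}$ that you use is indeed equivalent to \eqref{eq:heatkernelalpha1weightedsubordinatedboundsfinal} for $\zeta\ge0$ (the cross term $|r-s|^{1+\alpha}(t^{1/\alpha}+r+s)^{2\zeta}$ is always dominated by the sum of the two original terms), and your use of $\zeta\ge0$ only through monotonicity of $x\mapsto x^{2\zeta}$ is exactly where the hypothesis enters in the paper as well.
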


\begin{remarks}
  (1) Weighted 3G inequalities for Green's functions (resolvent kernels) and their application to Schr\"odinger perturbations were studied, e.g., by Hansen \cite{Hansen2006}, with quasi-metric interpretations.
  \\
  (2) The weights in \eqref{eq:3gheatalpha1}--\eqref{eq:3gheatalpha2} are independent of $t$ and $\tau$, but they involve all three spatial variables $r,s,z$, which is slightly incompatible with the setting of \cite{Hansen2006}.
  \\
  (3) As observed in \cite[p.~182]{BogdanJakubowski2007}, there is no 3G inequality for the Brownian motion in Euclidean space $\R^d$. However, there is a substitute, called 4G inequality, in Bogdan and Szczypkowski \cite[Theorem~1.3]{BogdanSzczypkowski2014}.
\end{remarks}
  
\begin{proof}[Proof of Theorem~\ref{3gheatalpha}]
  Define and observe
  \begin{align}
    \label{eq:deffaux}
    \begin{split}
      f(r,s,z)
      & := \left(\frac{s+z}{r+s}\right)^{2\zeta}\one_{\{r>s\vee z\}} + \one_{\{r<s\vee z\}}
        \sim \left(\frac{s+z}{r+s+z}\right)^{2\zeta}.
    \end{split}
  \end{align}
  To prove \eqref{eq:3gheatalpha1}, without loss of generality, we assume $r>s$. Then,
  $f(r,s,z)= \left(\tfrac{s+z}{r+s}\right)^{2\zeta}\one_{\{r>z\}} + \one_{\{r<z\}}$
  and $f(s,r,z)=1$.
  By \eqref{eq:heatkernelalpha1weightedsubordinatedboundsfinal},
  \begin{align}
    p_\zeta^{(\alpha)}(t,r,z)
    \sim \min\left\{\frac{t}{|r-z|^{1+\alpha} (r+z)^{2\zeta}},\frac{1}{t^{1/\alpha}(t^{1/\alpha}+r+z)^{2\zeta}}\right\}.
  \end{align}
  Thus, the left-hand side of \eqref{eq:3gheatalpha1} is
  \begin{align}
    \scriptsize
    \label{eq:3gheatalphaaux1}
    \begin{split}
      & \left(f(s,r,z) \cdot p_\zeta^{(\alpha)}(t,r,z)\right) \wedge \left(f(r,s,z) \cdot p_\zeta^{(\alpha)}(\tau,z,s)\right) \\
      & \ \sim \min\left\{\! \frac{t}{|r-z|^{1+\alpha}(r+z)^{2\zeta}},\frac{\tau\cdot f(r,s,z)}{|s-z|^{1+\alpha}(s+z)^{2\zeta}}, \frac{1}{t^{\frac1\alpha}(t^{\frac1\alpha}+r+z)^{2\zeta}},\frac{f(r,s,z)}{\tau^{\frac1\alpha}(\tau^{\frac1\alpha}+s+z)^{2\zeta}}\right\}.
    \end{split}
  \end{align}
  We begin with estimating the minimum of the first two terms on the right-hand side of \eqref{eq:3gheatalphaaux1}. Since $(s+z)^{2\zeta}\one_{z>r>s}\geq r^{2\zeta}\one_{z>r>s}$, we get
  \begin{align}
    \label{eq:3gheatalphaaux2}
    \begin{split}
      & \min\left\{\frac{t}{|r-z|^{1+\alpha}(r+z)^{2\zeta}},\frac{\tau f(r,s,z)}{|s-z|^{1+\alpha}(s+z)^{2\zeta}}\right\} \\
      & \quad \lesssim \frac{t+\tau}{r^{2\zeta}}\min\left\{\frac{1}{|r-z|^{1+\alpha}},\frac{1}{|s-z|^{1+\alpha}}\right\} \\
      & \quad \lesssim \frac{t+\tau}{r^{2\zeta}\cdot|r-s|^{1+\alpha}}
      \sim \frac{t+\tau}{|r-s|^{1+\alpha}(r+s)^{2\zeta}},
    \end{split}
  \end{align}
  which is half of our desired estimate. We now consider the minimum of the last two terms in \eqref{eq:3gheatalphaaux1} and claim
  \begin{align}
    \label{eq:3gheatalphaaux3}
    \begin{split}
      & \min\left\{\frac{1}{t^{1/\alpha}(t^{1/\alpha}+r)^{2\zeta}},\frac{f(r,s,z)}{\tau^{1/\alpha}(\tau^{1/\alpha}+s+z)^{2\zeta}}\right\} \\
      & \quad \lesssim \frac{1}{t^{1/\alpha}(t^{1/\alpha}+r)^{2\zeta} + \tau^{1/\alpha}(\tau^{1/\alpha}+r)^{2\zeta}}.
    \end{split}
  \end{align}
  Suppose \eqref{eq:3gheatalphaaux3} is true. Then, by
  \begin{align}
    \begin{split}
      (t+\tau)^{1/\alpha}((t+\tau)^{1/\alpha}+r+s)^{2\zeta}
      & \sim (t+\tau)^{1/\alpha}((t+\tau)^{1/\alpha}+r)^{2\zeta} \\
      & \lesssim (t+\tau)^{(1+2\zeta)/\alpha} + (t+\tau)^{1/\alpha}r^{2\zeta} \\
      & \lesssim t^{1/\alpha}(t^{2\zeta/\alpha}+r^{2\zeta}) + \tau^{1/\alpha}(\tau^{2\zeta/\alpha}+r^{2\zeta}) \\
      & \sim t^{1/\alpha}(t^{1/\alpha}+r)^{2\zeta} + \tau^{1/\alpha}(\tau^{1/\alpha}+r)^{2\zeta},
    \end{split}
  \end{align}
  we obtain
  \begin{align}
    \label{eq:3gheatalphaaux4}
    \begin{split}
      & \min\left\{\frac{1}{t^{\frac1\alpha}(t^{\frac1\alpha}+r+z)^{2\zeta}},\frac{f(r,s,z)}{\tau^{\frac1\alpha}(\tau^{\frac1\alpha}+s+z)^{2\zeta}}\right\}
      \lesssim \frac{1}{(t+\tau)^{\frac1\alpha}((t+\tau)^{\frac1\alpha}+r+s)^{2\zeta}},
    \end{split}
  \end{align}
  which, together with \eqref{eq:3gheatalphaaux2}, completes the proof of \eqref{eq:3gheatalpha1}. It remains to prove \eqref{eq:3gheatalphaaux3}. We distinguish between $z<r$ and $z>r$. In the latter case, we have
  \begin{align}
    \label{eq:3gheatalphaaux5}
    \begin{split}
      & \min\left\{\frac{1}{t^{\frac1\alpha}(t^{\frac1\alpha}+r+z)^{2\zeta}},\frac{1}{\tau^{\frac1\alpha}(\tau^{\frac1\alpha}+s+z)^{2\zeta}}\right\} \\
      & \quad \leq \min\left\{\frac{1}{t^{\frac1\alpha}(t^{\frac1\alpha}+r)^{2\zeta}},\frac{1}{\tau^{\frac1\alpha}(\tau^{\frac1\alpha}+r)^{2\zeta}}\right\},
    \end{split}
  \end{align}
  as desired. On the other hand, if $z<r$, then we have, using $s\vee z<r$,
  \begin{align*}
    \frac{(s+z)(\tau^{1/\alpha}+r)}{(r+s)(\tau^{1/\alpha}+s+z)}
    \sim \frac{(s+z)\tau^{1/\alpha} + r(s+z)}{r\tau^{1/\alpha} + r(s+z)}
    \leq 3,
  \end{align*}
  and so
  \begin{align}
    \label{eq:3gheatalphaaux6}
    \begin{split}
      & \min\left\{\frac{1}{t^{1/\alpha}(t^{1/\alpha}+r+z)^{2\zeta}},\frac{[(s+z)/(r+s)]^{2\zeta}}{\tau^{1/\alpha}(\tau^{1/\alpha}+s+z)^{2\zeta}}\right\} \\
      & \quad \lesssim \min\left\{\frac{1}{t^{1/\alpha}(t^{1/\alpha}+r)^{2\zeta}},\frac{1}{\tau^{1/\alpha}(\tau^{1/\alpha}+r)^{2\zeta}}\right\} \\
      & \quad \sim \frac{1}{t^{1/\alpha}(t^{1/\alpha}+r)^{2\zeta} + \tau^{1/\alpha}(\tau^{1/\alpha}+r)^{2\zeta}},
    \end{split}
  \end{align}
  which shows \eqref{eq:3gheatalphaaux3} for $z<r$. Plugging \eqref{eq:3gheatalphaaux2} and \eqref{eq:3gheatalphaaux4} into \eqref{eq:3gheatalphaaux1} yields
  \begin{align}
    \begin{split}
      & \left(f(s,r,z)\cdot p_\zeta^{(\alpha)}(t,r,z)\right) \wedge \left(f(r,s,z) \cdot p_\zeta^{(\alpha)}(\tau,z,s)\right) \\
      & \quad \lesssim \min\left\{\frac{t+\tau}{|r-s|^{1+\alpha}(r+s)^{2\zeta}}, \frac{1}{(t+\tau)^{1/\alpha}((t+\tau)^{1/\alpha}+r+s)^{2\zeta}}\right\} \\
      & \quad \sim p_\zeta^{(\alpha)}(t+\tau,r,s),
    \end{split}
  \end{align}
  as claimed. Estimate \eqref{eq:3gheatalpha2} follows from \eqref{eq:3gheatalpha1}.
\end{proof}

In our work \cite{Bogdanetal2024} with Tomasz Jakubowski, we prove and use the following variant of the above 3G inequality, which is important to prove the continuity of Schr\"odinger perturbations of $p_\zeta^{(\alpha)}(t,r,s)$ by Hardy potentials. In particular, it holds for all $\zeta>-1/2$.

\begin{theorem}[{\cite[Lemma~2.5]{Bogdanetal2024}}]
  \label{lem:3Gineq}
  Let $\zeta\in(-1/2,\infty)$ and $\alpha\in(0,2)$. Then, for all $t,\tau,t,s,r,z>0$, we have
  \begin{align}
    \frac{p_\zeta^{(\alpha)}(t,r,z) p_\zeta^{(\alpha)}(\tau,z,s)}{p^{(\alpha)}(t+\tau,r,s)}
    \lesssim_{\zeta,\alpha} \frac{p_\zeta^{(\alpha)}(t,r,z)}{(\tau^{1/\alpha}+z+s)^{2\zeta}} + \frac{p_\zeta^{(\alpha)}(\tau,z,s)}{(t^{1/\alpha}+z+r)^{2\zeta}}.
  \end{align}
\end{theorem}

For the sake of completeness, we give the short proof using the notation
\begin{align}
  \label{eq:stabledensity}
  p^{(\alpha)}(t,r,s) := \frac{1}{2\pi}\int_{-\infty}^\infty \me{-i(s-r)z} \me{-t|z|^\alpha}\,dz
\end{align}
for the stable density on the real line.

\begin{proof}
  By Theorem~\ref{heatkernelalpha1subordinatedboundsfinal},
  \begin{align*}
    p_\zeta^{(\alpha)}(t,r,s) \sim_{\zeta,\alpha} p^{(\alpha)}(t,r,s) (t^{1/\alpha} + r+s)^{-2\zeta}.
  \end{align*}
  Thus, the claim follows from the 3G inequality for $p^{(\alpha)}(t,r,s)$ in \cite[(7)--(9)]{BogdanJakubowski2007}.
\end{proof}

\section{Comparability results for $p_\zeta^{(\alpha)}$}
\label{s:comparison}

The following comparability results are crucial for our work \cite{Bogdanetal2024}. As we will see, they follow from the bounds in Theorem~\ref{heatkernelalpha1subordinatedboundsfinal}.

\begin{theorem}
  \label{comparablealpha}
  Let $\zeta\in(-1/2,\infty)$ and $\alpha\in(0,2]$.
  \begin{enumerate}
  \item Let $z,s>0$, $0<C\leq 1$, and $\tau\in[C,C^{-1}]$. Then, there is $c_j=c_j(\zeta,C)$, $j\in\{1,2\}$ with
    \begin{align}
      \label{eq:comparablealpha1}
      \begin{split}
        p_\zeta^{(2)}(1,c_1 z,c_1 s) & \lesssim_{C,\zeta} p_\zeta^{(2)}(\tau,z,s) \lesssim_{C,\zeta} p_\zeta^{(2)}(1,c_2 z,c_2 s), \\
        p_\zeta^{(\alpha)}(\tau,z,s) & \sim_{C,\zeta,\alpha} p_\zeta^{(\alpha)}(1,z,s), \quad \alpha<2.
      \end{split}
    \end{align}
    In particular, for $\alpha\in(0,2)$ and $\tau,z,s,c>0$, one has
    \begin{align}
      \label{eq:comparablealpha7}
      \begin{split}
        p_\zeta^{(\alpha)}(\tau,cz,cs) \sim_{\zeta,\alpha,c} p_\zeta^{(\alpha)}(\tau,z,s), \\
        p_\zeta^{(\alpha)}(c\tau,z,s) \sim_{\zeta,\alpha,c} p_\zeta^{(\alpha)}(\tau,z,s), \quad \alpha<2.
      \end{split}
    \end{align}
    
  \item Let $C,\tau>0$ and $0<z\leq s/2<\infty$. Then, there is $c=c(\zeta,C)$ with
    \begin{subequations}
      \label{eq:comparablealpha2}
      \begin{align}
        \label{eq:comparablealpha2a}
        \begin{split}
          p_\zeta^{(\alpha)}(\tau,z,s)
          & \lesssim_{C,\zeta,\alpha} p_\zeta^{(\alpha)}(\tau,c,c s)\one_{\{\tau>C\}} \\
          & \quad + \left(\tau^{-\frac12}\frac{\me{-cs^2/\tau}}{(\tau+s^2)^{\zeta}}\one_{\alpha=2} + \frac{\tau}{s^{2\zeta+1+\alpha}+\tau^{\frac{2\zeta+1+\alpha}{\alpha}}} \one_{\alpha\in(0,2)}\right)\one_{\{\tau<C\}},
        \end{split}
        \\
        \label{eq:comparablealpha2b}
        p_\zeta^{(\alpha)}(\tau,z,s)
        & \lesssim_{\zeta,\alpha} \tau^{-\frac12}\frac{\me{-cs^2/\tau}}{(\tau+s^2)^{\zeta}}\one_{\alpha=2} + \frac{\tau}{s^{2\zeta+1+\alpha} + \tau^{(2\zeta+1+\alpha)/\alpha}}\one_{\alpha\in(0,2)}, \\
        \label{eq:comparablealpha2c}
        p_\zeta^{(\alpha)}(\tau,z,s)
        & \lesssim_{\zeta,\alpha} s^{-(2\zeta+1)}.
      \end{align}
    \end{subequations}

  \item Let $0<\tau\leq1$, $0<z\leq s/2$, and $s\geq C>0$. Then, there is $c=c(\zeta,C)$ with
    \begin{align}
      \label{eq:comparablealpha3}
      p_\zeta^{(\alpha)}(\tau,z,s) \lesssim_{C,\zeta,\alpha} p_\zeta^{(\alpha)}(1,c,c s).
    \end{align}

  \item Let $0<r\leq s$. Then, there is $c=c(\zeta)$ with
    \begin{align}
      \label{eq:comparablealpha5}
      p_\zeta^{(\alpha)}(1,1,s) \lesssim_{\zeta,\alpha} p_\zeta^{(\alpha)}(1,c r,c s).
    \end{align}
    In particular, for all $r,s>0$,
    \begin{align}
      \label{eq:comparablealpha4}
      \min\{p_\zeta^{(\alpha)}(1,1,r),p_\zeta^{(\alpha)}(1,1,s)\}
      \lesssim_{\zeta,\alpha} p_\zeta^{(\alpha)}(1,c r,c s).
    \end{align}

  \item Let $r,s,z,t>0$ with $|z-s|>|r-s|/2$. Then, there is $c=c(\zeta)$ with
    \begin{align}
      \label{eq:comparablealpha6}
      p_\zeta^{(\alpha)}(t,z,s) \lesssim_{\zeta,\alpha} p_\zeta^{(\alpha)}(t,cr,cs).
    \end{align}
  \end{enumerate}
\end{theorem}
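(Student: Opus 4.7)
The plan is to reduce to $t=1$ via the scaling~\eqref{eq:scalingalpha}, invoke the sharp two-sided bounds of Theorem~\ref{heatkernelalpha1subordinatedboundsfinal}, and compare the resulting denominators (for $\alpha<2$) or Gaussian and polynomial factors (for $\alpha=2$). Elementary consequences of the hypothesis are
\[
|r-s|\le 2|z-s|,\qquad r\le 3(z+s),\qquad r+s\le 4(z+s),
\]
and I would pick $c=c(\zeta,\alpha)>0$ as a fixed small universal number (e.g.\ $c=1/4$ for $\alpha<2$).

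For $\alpha\in(0,2)$, setting
\[
D(x,y):=|x-y|^{1+\alpha}(x+y)^{2\zeta}+(1+x+y)^{2\zeta},
\]
estimate~\eqref{eq:heatkernelalpha1weightedsubordinatedboundsfinal} reduces the claim to $D(cr,cs)\lesssim_{\zeta,\alpha}D(z,s)$. For $\zeta\ge 0$ this is immediate termwise, using the three inequalities above. For $\zeta\in(-1/2,0)$, the trouble is that $(r+s)^{2\zeta}$ can blow up as $r\to 0^+$. I would split on whether $|z-s|<s/2$ or $|z-s|\ge s/2$. In the former case the constraint forces $r\in(s-2|z-s|,s+2|z-s|)$, so $r\sim s\sim z$ and the comparison is direct. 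In the latter case, where $r$ may approach $0$, the function $r\mapsto|r-s|^{1+\alpha}(r+s)^{2\zeta}$ on the feasible set is bounded by its maximum at the two endpoints $r\to 0^+$ and $r=s+2|z-s|$: the latter endpoint yields a quantity $\lesssim|z-s|^{1+\alpha}(z+s)^{2\zeta}$ via $s+|z-s|\ge(z+s)/2$, while the former yields $s^{1+\alpha+2\zeta}$, which, since $|z-s|\ge s/2$ and $1+\alpha+2\zeta>0$, is in turn $\lesssim|z-s|^{1+\alpha}(z+s)^{2\zeta}$ (when $z+s\gtrsim 1$) or $\lesssim(1+z+s)^{2\zeta}$ (when $z+s\lesssim 1$).

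For $\alpha=2$, I would use the Gaussian form~\eqref{eq:easybounds2a}. Choosing $c$ small enough that the hypothesis $(r-s)^2\le 4(z-s)^2$ leaves a residual Gaussian decay of the form $\exp(-(z-s)^2/(C_\zeta t))$ in the ratio $p_\zeta^{(2)}(t,z,s)/p_\zeta^{(2)}(t,cr,cs)$, the remaining polynomial factor $((c^2 rs+t)/(zs+t))^\zeta$ is controlled using $r\le 3(z+s)$ together with the key observation that $zs\lesssim(z-s)^2$ whenever $|z-s|\ge s/2$; this allows the residual Gaussian to absorb any polynomial growth. In the complementary regime $|z-s|<s/2$ one has $r\sim s\sim z$, and the polynomial ratio is bounded by a $\zeta,\alpha$-dependent constant directly.

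The main obstacle is the case $\zeta\in(-1/2,0)$, $\alpha\in(0,2)$, where $(r+s)^{2\zeta}\to\infty$ as $r\to 0^+$; the endpoint analysis under the split $|z-s|\ge s/2$ versus $|z-s|<s/2$ is what resolves it.
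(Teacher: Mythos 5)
Your proposal only addresses item~(5) of the theorem: the hypothesis you work with, $|z-s|>|r-s|/2$, belongs to~\eqref{eq:comparablealpha6}, and nothing in your text touches items~(1)--(4), which require separate (and in the paper quite explicit) computations from the two-sided bounds. So as a proof of the stated Theorem~\ref{comparablealpha} this is substantially incomplete.

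Restricting to item~(5), the overall strategy --- scale to $t=1$, plug in the two-sided bounds of Theorem~\ref{heatkernelalpha1subordinatedboundsfinal}, and compare the resulting denominators (for $\alpha<2$) or Gaussian and polynomial factors (for $\alpha=2$) --- matches the paper's. The paper organizes the case analysis around the relative sizes of $r$, $s$, $z$ (first disposing of $z>r$ and of $z>s$, then using $3s/2>z+r/2>r/2$ when $z<s<r$), whereas you split on $|z-s|\lessgtr s/2$ and do an endpoint analysis of $r\mapsto|r-s|^{1+\alpha}(r+s)^{2\zeta}$; both are viable routes to the same monotonicity facts. However, there is a concrete gap in your treatment of the regime $|z-s|<s/2$: the constraint $|r-s|<2|z-s|$ gives $r\in(s-2|z-s|,\,s+2|z-s|)$, and this lower bound tends to $0$ as $|z-s|\uparrow s/2$, so it does \emph{not} force $r\sim s$. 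What it does give is $r+s\sim s\sim z$, which is enough for $\alpha<2$ (only $r+s$ enters the bound), but \emph{not} for $\alpha=2$, where the denominator involves $rs+t$ and hence $rs$ can be much smaller than $zs\sim s^2$. In that situation the ratio $((zs+t)/(c^2rs+t))^{|\zeta|}$ need not be bounded by a constant, contrary to your claim; you still need the residual Gaussian decay $\exp(-(z-s)^2/(Ct))$ (which is available because $|z-s|$ close to $s/2$ forces $(z-s)^2\sim s^2$). A cleaner split, e.g.\ $|z-s|<s/4$ versus $|z-s|\geq s/4$, would make the first case genuinely give $r\sim s$. Finally, for $\alpha<2$ and $\zeta\in(-1/2,0)$ you do not explicitly bound the second summand $(1+r+s)^{2\zeta}$ by $D(z,s)$; this does go through (e.g.\ via $(1+r+s)^{2\zeta}\le 1$ and $D(z,s)\gtrsim 1$ when $z+s\lesssim1$, while for $z+s\gtrsim 1$ one uses $|z-s|\sim z+s$ in this regime), but it should be said.
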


The constants $c$ in the arguments of functions in Theorem~\ref{comparablealpha} may change from place to place. Note that in the above estimates the point $z=1$ is a natural reference point for the heat kernel $p_\zeta^{(\alpha)}(\tau,z,s)$ when $s\gg z$ and $\tau\sim1$ by the spatial homogeneity, i.e., the scaling \eqref{eq:scalingalpha}, of $p_\zeta^{(\alpha)}$ (see, e.g., \eqref{eq:comparablealpha3}, where $s\gg z$ and $z$ is replaced by a constant of order $1$). This is in contrast to the analysis of the heat kernel $\me{-\tau(-\Delta)^{\alpha/2}}(z,y)$ in $\R^d$, where $z=0$ is a natural reference point when $\tau\sim1$ and $|y|\gg|z|$ because in this setting we have $\me{-\tau(-\Delta)^{\alpha/2}}(z,y) \lesssim \me{-\tau(-\Delta)^{\alpha/2}}(0,y)$ in view of the translation invariance, i.e., the fact that $\me{-\tau(-\Delta)^{\alpha/2}}(z,y)$ only depends on $z$ and $y$ via the difference $|y-z|$.
We give some interpretations of the bounds in Theorem~\ref{comparablealpha} after the proof.

\begin{proof}
  \begin{enumerate}
  \item The estimates follow from \eqref{eq:easybounds2} for $\alpha=2$ and from \eqref{eq:heatkernelalpha1weightedsubordinatedboundsfinal} if $\alpha<2$.
    
    \smallskip
  \item We start with $\alpha=2$. We first consider \eqref{eq:comparablealpha2a}. By \eqref{eq:easybounds2a},
    \begin{align}
      p_\zeta^{(2)}(\tau,z,s)
      \asymp \tau^{-\frac12}\frac{\exp(-c(s-z)^2/\tau)}{(\tau+sz)^\zeta}
      \asymp \tau^{-\frac12}\frac{\exp(-cs^2/\tau)}{(\tau+sz)^\zeta}.
    \end{align}
    If $\tau>C>0$, then, by $2s/\tau\leq(1+s^2)/\tau\lesssim_C 1+s^2/\tau$,
    \begin{align}
      \begin{split}
        \frac{(\tau+s)^\zeta}{(\tau+sz)^\zeta}\me{-cs^2/\tau}
        & \leq \left(1+\frac s\tau\right)^\zeta \me{-cs^2/\tau}
          \lesssim_C \left(1+\frac{s^2}{\tau}\right)^\zeta \me{-cs^2/\tau}
          \lesssim_\zeta \me{-cs^2/\tau} \\
        & \sim_C \me{-c(s-1)^2/\tau},
      \end{split}
    \end{align}
    which proves \eqref{eq:comparablealpha2a} for $\tau>C>0$.
    Similarly, we have for all $\tau>0$,
    \begin{align}
      \frac{(\tau+s^2)^\zeta}{(\tau+sz)^\zeta}\me{-cs^2/\tau}
      \leq (1+s^2/\tau)^\zeta \me{-cs^2/\tau}
      \lesssim \me{-cs^2/\tau},
    \end{align}
    which yields \eqref{eq:comparablealpha2b} and the second part of \eqref{eq:comparablealpha2a}, where $\tau<C$.
    Estimate \eqref{eq:comparablealpha2c} follows from
    \begin{align}
      p_\zeta^{(2)}(\tau,z,s)
      \lesssim \left(\frac{s^2}{\tau}\right)^{\zeta+\frac12} \exp(-cs^2/\tau) \cdot s^{-(2\zeta+1)}
      \lesssim s^{-(2\zeta+1)}.
    \end{align}
    This concludes the proof of \eqref{eq:comparablealpha2a}--\eqref{eq:comparablealpha2c} for $\alpha=2$.
    
    Now we verify \eqref{eq:comparablealpha2} for $\alpha<2$. We start with \eqref{eq:comparablealpha2a}. By \eqref{eq:heatkernelalpha1weightedsubordinatedboundsfinal},
    \begin{align}
      p_\zeta^{(\alpha)}(\tau,z,s)
      \sim \frac{\tau}{s^{2\zeta+1+\alpha} + \tau^{1+1/\alpha}(\tau^{1/\alpha}+s)^{2\zeta}}.
    \end{align}
    Thus, \eqref{eq:comparablealpha2a} for $\tau>C>0$ follows from
    \begin{align}
      \label{eq:comparablealpha2aaux1}
      \begin{split}
        & \frac{|s-1|^{1+\alpha}(s+1)^{2\zeta}}{s^{2\zeta+1+\alpha} + \tau^{1+1/\alpha}(\tau^{1/\alpha}+s)^{2\zeta}}
        + \frac{\tau^{1+1/\alpha}(\tau^{1/\alpha}+s+1)^{2\zeta}}{s^{2\zeta+1+\alpha} + \tau^{1+1/\alpha}(\tau^{1/\alpha}+s)^{2\zeta}} \\
        & \quad \lesssim \frac{s^{1+\alpha}(1+s)^{2\zeta} + (s+1)^{2\zeta}}{s^{2\zeta+1+\alpha}+1} + 1 + \frac{1}{(\tau^{1/\alpha}+s)^{2\zeta}} \\
        & \quad \lesssim_{C,\zeta} 1.
      \end{split}
    \end{align}
    Similarly, for all $\tau>0$,
    \begin{align}
      p_\zeta^{(\alpha)}(\tau,z,s)
      \lesssim \frac{\tau}{s^{2\zeta+1+\alpha} + \tau^{\frac{2\zeta+1+\alpha}{\alpha}}},
    \end{align}
    which yields \eqref{eq:comparablealpha2b} and the second part of \eqref{eq:comparablealpha2a}. Estimate \eqref{eq:comparablealpha2c} follows from
    \begin{align}
      \begin{split}
        p_\zeta^{(\alpha)}(\tau,z,s)
        & \lesssim \frac{\tau}{s^{2\zeta+1+\alpha} + \tau^{(1+\alpha)/\alpha}\cdot s^{2\zeta}} \\
        & \lesssim \frac{\tau}{s^{2\zeta+1+\alpha}}\one_{\{\tau<s^\alpha\}} + \frac{\tau}{\tau^{(1+\alpha)/\alpha}\cdot s^{2\zeta}}\one_{\{\tau>s^\alpha\}}
        \lesssim s^{-(2\zeta+1)}.
      \end{split}
    \end{align}
    This concludes the proof of \eqref{eq:comparablealpha2a}--\eqref{eq:comparablealpha2c} for $\alpha<2$.

    \smallskip
  \item To prove \eqref{eq:comparablealpha3}, we begin with $\alpha=2$. By \eqref{eq:easybounds2a}, we have, for $\tau\in(0,1]$, $0<z\leq s/2$, and $s\geq C>0$,
    \begin{align}
      \begin{split}
        p_\zeta^{(2)}(\tau,z,s)
        & \asymp \tau^{-\frac12}\frac{\exp\left(-\frac{c(z-s)^2}{\tau}\right)}{(zs+\tau)^{\zeta}}
          \lesssim \frac{\exp(-cs^2/\tau)}{\tau^{\zeta+1/2}} \cdot \frac{s^{2\zeta+1}}{s^{2\zeta+1}} \\
        & \lesssim_C \frac{(s^2/\tau)^{\zeta+1/2} \exp(-cs^2/\tau)}{s^{\zeta}}
          \lesssim_C \frac{\me{-cs^2/\tau}}{(s+1)^{\zeta}}
          \lesssim \frac{\me{-c(s-1)^2}}{(s+1)^{\zeta}}.
      \end{split}
    \end{align}
    This concludes the proof of \eqref{eq:comparablealpha3} for $\alpha=2$.
    
    If $\alpha<2$, we have, for $\tau\in(0,1]$, $0<z\leq s/2$, and $s\geq C>0$,
    \begin{align}
      \begin{split}
        p_\zeta^{(\alpha)}(\tau,z,s)
        & \sim_{\zeta} \frac{\tau}{s^{2\zeta+1+\alpha} + \tau^{1+1/\alpha}(\tau^{1/\alpha}+s)^{2\zeta}}.
      \end{split}
    \end{align}
    Thus, \eqref{eq:comparablealpha3} follows from
    \begin{align}
      \label{eq:comparablealpha3aux1}
      \begin{split}
        & \frac{\tau\left(|s-1|^{1+\alpha}(s+1)^{2\zeta} + (1+s)^{2\zeta}\right)}{s^{2\zeta+1+\alpha} + \tau^{1+1/\alpha}(\tau^{1/\alpha}+s)^{2\zeta}}
          \lesssim \frac{(1+s)^{2\zeta}(1 + s^{1+\alpha})}{s^{2\zeta+1+\alpha}}
          \lesssim_C 1.
      \end{split}
    \end{align}
    This concludes the proof of \eqref{eq:comparablealpha3} for $\alpha<2$.

    \smallskip
  \item We come to \eqref{eq:comparablealpha5}, with 
    $0<r\leq s$.
    We first treat $\alpha=2$. By \eqref{eq:easybounds2a},
    \begin{align}
      \begin{split}
        p_\zeta^{(2)}(1,1,s)
        & \asymp_{\zeta} \frac{\me{-c(s-1)^2}}{(s+1)^\zeta}
          \lesssim \frac{(1+rs)^\zeta \me{-cs^2}}{(s+1)^\zeta (1+rs)^\zeta}
          \leq \frac{(1+s^2)^\zeta \me{-cs^2}}{(1+rs)^\zeta}
          \lesssim_\zeta \frac{\me{-cs^2}}{(1+rs)^\zeta} \\
        & \lesssim \frac{\me{-c(r-s)^2}}{(rs+1)^\zeta}
          \asymp_{\zeta} p_\zeta^{(2)}(1,cr,cs),
      \end{split}
    \end{align}
    where we used $2s^2\geq (r-s)^2$. This concludes the proof of \eqref{eq:comparablealpha5} for $\alpha=2$.
    
    If $\alpha<2$, then, by \eqref{eq:heatkernelalpha1weightedsubordinatedboundsfinal},
    \begin{align}
      \begin{split}
        p_\zeta^{(\alpha)}(1,1,s)
        & \sim_{\zeta} \frac{1}{|s-1|^{1+\alpha}(1+s)^{2\zeta} + (1+s)^{2\zeta}}.
      \end{split}
    \end{align}
    Since $2s^2\geq (r-s)^2$, we have
    \begin{align}
      \begin{split}
        & \frac{|r-s|^{1+\alpha}(r+s)^{2\zeta} + (1+r+s)^{2\zeta}}{|s-1|^{1+\alpha}(1+s)^{2\zeta} + (1+s)^{2\zeta}} \\
        & \quad \lesssim \frac{s^{2\zeta+1+\alpha} + (1+s)^{2\zeta}}{|s-1|^{1+\alpha}(1+s)^{2\zeta} + (1+s)^{2\zeta}}
        \lesssim 1,
      \end{split}
    \end{align}
    which concludes the proof of \eqref{eq:comparablealpha5} for $\alpha<2$.
    To prove \eqref{eq:comparablealpha4}, it suffices, by
  %
    symmetry, to assume $s\geq r>0$. Then, the claim follows from \eqref{eq:comparablealpha5} since $\min\{p_\zeta^{(\alpha)}(1,1,r),p_\zeta^{(\alpha)}(1,1,s)\} \leq p_\zeta^{(\alpha)}(1,1,s)$.
    
    
    
    \smallskip
  \item We come to \eqref{eq:comparablealpha6}.
    If $z>r$, the estimate follows immediately from \eqref{eq:easybounds2a} if $\alpha=2$ and from \eqref{eq:heatkernelalpha1weightedsubordinatedboundsfinal} if $\alpha<2$. So, suppose $z<r$. We distinguish now between $z>s$ and $z<s$ and start with the former case. Then, the assumption $|z-s|>|r-s|/2$ implies
    \begin{align}
      z = z-s+s\geq s+\frac{|r-s|}{2}
      = \frac{r+s}{2}
      \gtrsim r.
    \end{align}
    Thus, $z\gtrsim r$ and we can again use \eqref{eq:easybounds2a} for $\alpha=2$ and \eqref{eq:heatkernelalpha1weightedsubordinatedboundsfinal} for $\alpha<2$ to conclude the estimate in this case.
    Thus, we are left to treat $z<s$.
    We first let $\alpha=2$ and distinguish between the three subcases $s>2r$, $s<r/2$, and $s\in(r/2,2r)$. To treat $s>2r$ and $s<r/2$, we estimate
    \begin{align}
      \begin{split}
        \frac{\exp\left(-\frac{c(z-s)^2}{t}\right)}{(zs+t)^\zeta}
        & \leq \frac{(rs/t+1)^\zeta}{(zs/t+1)^\zeta} \cdot \frac{\exp\left(-\frac{c(r-s)^2}{4t}\right)}{(rs+t)^\zeta} \\
        & \leq \left(1+\frac{rs}{t}\right)^\zeta \cdot \exp\left(-\frac{c(r-s)^2}{8t}\right) \cdot \frac{\exp\left(-\frac{c(r-s)^2}{8t}\right)}{(rs+t)^\zeta} \\
        & \lesssim_\zeta \frac{\exp\left(-\frac{c(r-s)^2}{8t}\right)}{(rs+t)^\zeta},
      \end{split}
    \end{align}
    as desired. It remains to treat the case $s\in(r/2,2r)$ and $z<r\vee s$. We distinguish between $s>2z$ and $s\in(z,2z)$. If $s>2z$, then we can argue similarly as before and obtain
    \begin{align}
      \begin{split}
        \frac{\exp\left(-\frac{c(z-s)^2}{t}\right)}{(zs+t)^\zeta}
        & \leq \left(1+\frac{rs}{t}\right)^\zeta \cdot \exp\left(-\frac{c(z-s)^2}{8t}\right) \cdot \frac{\exp\left(-\frac{c(r-s)^2}{8t}\right)}{(rs+t)^\zeta} \\
        & \leq \left(1+\frac{2s^2}{t}\right)^\zeta \cdot \exp\left(-\frac{cs^2}{8t}\right) \cdot \frac{\exp\left(-\frac{c(r-s)^2}{8t}\right)}{(rs+t)^\zeta} \\
        & \lesssim_\zeta \frac{\exp\left(-\frac{c(r-s)^2}{8t}\right)}{(rs+t)^\zeta}.
      \end{split}
    \end{align}
    Finally, for $s\in(r/2,2r)$, $z<r$, and $s\in(z,2z)$, we use $zs>s^2/2>rs/4$ to get the desired estimate in the denominator in \eqref{eq:easybounds2a}. This completes the proof of \eqref{eq:comparablealpha6} for $\alpha=2$.
    
    Now we deal with $\alpha<2$ and $z<r\wedge s$.
    Observe that in \eqref{eq:heatkernelalpha1weightedsubordinatedboundsfinal} it suffices to estimate $z+s\gtrsim s+r$ and $z^2+s^2\gtrsim r^2+s^2$. These estimates follow immediately if $s>r$. So, suppose $z<s<r$. Then, by the assumption $|z-s|=s-z>|r-s|/2=(r-s)/2$, we get $3s/2>z+r/2>r/2$. This completes the proof of \eqref{eq:comparablealpha6}.
  \end{enumerate}
  The proof of Theorem~\ref{comparablealpha} is concluded.
\end{proof}

We close with some interpretations of the bounds in Theorem~\ref{comparablealpha}. The numbering of the following remarks refers to the numbering of the bounds in Theorem~\ref{comparablealpha}.
\begin{remarks}
  (1) For fixed time scales $\tau\sim1$, the heat kernels $p_\zeta^{(\alpha)}(\tau,z,s)$ and $p_\zeta^{(\alpha)}(1,r,s)$ are comparable for all $z,s>0$.
  \\
  (2) Spatial or temporal dilations on the unit order are negligible when $\alpha<2$.
  \\
  (3) Suppose $s\geq2z$. For large times $\tau>C$, the heat kernel $p_\zeta^{(\alpha)}(\tau,z,s)$ can be replaced with $p_\zeta^{(\alpha)}(\tau,1,s)$, even if $z$ is small, thanks to the lower-boundedness of $\tau$ and the large distance between $z$ and $s$. For small times $\tau<C$, the distance $|s-z|$ can still be replaced with $s$. However, for small $z$ and $\tau$, we cannot compare $p_\zeta^{(\alpha)}(\tau,z,s)$ with $p_\zeta^{(\alpha)}(\tau,1,s)$. At least when $\alpha<2$, we can $p_\zeta^{(\alpha)}(\tau,z,s)$ compare with $p_\zeta^{(\alpha)}(\tau,0,s)$.
  \\
  (4) For small times $\tau\in(0,1]$ and locations $s\geq2z$, which are strictly away from zero, the heat kernel $p_\zeta^{(\alpha)}(\tau,z,s)$ can be estimated from above by $p_\zeta^{(\alpha)}(1,1,s)$. This is because the lower-boundedness of $s$ prevents the heat kernel from blowing up for small times and small $z$, and the large separation between $s$ and $z$ allows to replace $|z-s|$ and $z+s$ by $s$.
  \\
  (5) For fixed time $\tau=1$ and location $s>0$, it is more likely to go from $s$ to $r\leq s$ than from $s$ to $1$. This is plausible for $r>1$, while, if $r\leq1$, it does not really matter if we go to $1$ or to $r$ within a unit time step.
  \\
  (6) The probability for going from $s$ to $z$ is bounded by that for going from $s$ to $r$ when $|z-s|$ is greater than $|r-s|$. Moreover, the (killing or mass creating) effect from the boundary, i.e., the origin, is negligible.
\end{remarks}

\appendix

\section{Proof of Theorem~\ref{heatkernelalpha1subordinatedboundsfinal} for $\alpha\in(0,2)$}
\label{s:proofheatkernelalpha1subordinatedboundsfinal}
Recall our claim \eqref{eq:heatkernelalpha1weightedsubordinatedboundsfinal}. Namely, if
$\alpha\in(0,2)$ and $\zeta>-1/2$, then
\begin{align*}
  p_\zeta^{(\alpha)}(t,r,s)
  & \sim_{\zeta,\alpha} \frac{t}{|r-s|^{1+\alpha}(r+s)^{2\zeta} + t^{\frac{1+\alpha}{\alpha}}(t^{\frac1\alpha}+r+s)^{2\zeta}}\quad \mbox{ for $r,s,t>0$.}
\end{align*}
By the scaling \eqref{eq:scalingalpha},  
$t=1$ suffices. We split the proof into several steps.

\subsection{Auxiliary bounds}\label{A.1}

We first prove the following, rather bulky, bounds, namely
\begin{align}
  \begin{split}
    \label{eq:heatkernelalpha1subordinatedboundsfinalaux1}
    p_\zeta^{(\alpha)}(1,r,s)
    & \sim_{\zeta,\alpha} \one_{rs<1}\one_{(r-s)^2<1} \\
    & \qquad + (rs)^{-\zeta}\one_{(r-s)^2<1<rs} \\
    & \qquad + |r-s|^{-(2\zeta+1+\alpha)}\one_{rs<1<(r-s)^2} \\
    & \qquad + \one_{rs>1}\one_{(r-s)^2>1} \left(\! \frac{\one_{rs<(r-s)^2}}{|r-s|^{2\zeta+1+\alpha}} + \frac{\one_{rs>(r-s)^2}}{(rs)^{\zeta} |r-s|^{1+\alpha}} \!\right).
    \end{split}
\end{align}
To prove \eqref{eq:heatkernelalpha1subordinatedboundsfinalaux1}, we use the subordinator bounds
\begin{align}
  \sigma_{1}^{(\alpha/2)}(\tau)
  \sim_\alpha \frac{\exp\left(-C(\alpha)\tau^{-c_1}\right)}{\tau^{c_2}}\one_{\tau<1} + \tau^{-1-\alpha/2} \one_{\tau>1},
\end{align}
with $C(\alpha)>0$, $c_1=c_1(\alpha)=\alpha/(2-\alpha)\in(0,\infty)$, and $c_2=c_2(\alpha)=(2-\alpha/2)/(2-\alpha)\in(1,\infty)$; see, e.g., \cite[Proposition~B.1]{BogdanMerz2024}.
Since $\R_+\ni x\mapsto\me{-1/x^s}$ vanishes at zero faster than any polynomial whenever $s>0$, there are $C_1(\alpha),C_2(\alpha)>0$ with $C_1(\alpha)>C_2(\alpha)$ such that
\begin{align}
  \label{eq:subordinatorboundsconsequence}
  \frac{\exp\left(-C_1(\alpha)\tau^{-c_1}\right)}{\tau^{1+\alpha/2}}
  \lesssim_\alpha \sigma_{1}^{(\alpha/2)}(\tau)
  \lesssim_\alpha \frac{\exp\left(-C_2(\alpha)\tau^{-c_1}\right)}{\tau^{1+\alpha/2}}.
\end{align}
From the following computations, it will transpire that the precise rate of the exponential decay of $\sigma_1^{(\alpha/2)}(\tau)$ at $\tau=0$ is irrelevant. This motivates us to not distinguish between $C_1(\alpha)$ and $C_2(\alpha)$ in \eqref{eq:subordinatorboundsconsequence} in the following and simply write
\begin{align}
  \label{eq:subordinatorboundsconsequence2}
  \sigma_{1}^{(\alpha/2)}(\tau)
  \asymp_\alpha \frac{\exp\left(-C(\alpha)\tau^{-c_1}\right)}{\tau^{1+\alpha/2}}.
\end{align}
We recall that the notation $\asymp$ means the same as $\sim$, but constants in the argument of the exponential function (like $C(\alpha)$ in \eqref{eq:subordinatorboundsconsequence2}) may be different in the upper and lower bounds.
For simplicity, in the following, we will not explicitly indicate any parameter dependence of constants appearing as prefactors or in exponentials anymore, so we will write, e.g., $c$ for $c_\zeta$.
Then, Formulae \eqref{eq:subordination}, \eqref{eq:easybounds2}, and \eqref{eq:subordinatorboundsconsequence2} yield, for some $c,c_1>0$,
\begin{align}
  \label{eq:heatkernelalphasubordinatedbounds}
  \begin{split}
    p_\zeta^{(\alpha)}(1,r,s)
    & \asymp \int_0^\infty \frac{d\tau}{\tau} \tau^{-\frac{1+\alpha}{2}} \left[\frac{1}{\tau^{\zeta}}\one_{rs\leq\tau} + \frac{\one_{rs\geq\tau}}{(rs)^{\zeta}}\right] \exp\left(\!-c\left(\frac{(r-s)^2}{\tau}+\frac{1}{\tau^{c_1}}\right)\!\right).
  \end{split}
\end{align}
We now estimate the integral on the right of \eqref{eq:heatkernelalphasubordinatedbounds}. To this end, we distinguish between four cases based on $rs\lessgtr1$ and $|r-s|\lessgtr1$. Although the notation will not emphasize it, we repeat once more that the following estimates are not uniform in $\zeta$ or $\alpha$.

\subsubsection{Case $rs\vee(r-s)^2<1$}

We show that $p_\zeta^{(\alpha)}(1,r,s)\sim1$. We consider the first summand in \eqref{eq:heatkernelalphasubordinatedbounds} and estimate
\begin{align}
  \label{eq:heatkernelalphasubordinatedboundsaux1}
  \begin{split}
    & \int_{rs}^\infty \frac{d\tau}{\tau} \tau^{-\zeta-\frac{1+\alpha}{2}} \me{-c((r-s)^2/\tau + 1/\tau^{c_1})} \\
    & \quad \asymp \left[\int_{rs}^1 \frac{d\tau}{\tau} \tau^{-\frac{2\zeta+1+\alpha}{2}} \me{-c((r-s)^2/\tau+\tau^{-c_1})} + \int_{1}^\infty \frac{d\tau}{\tau} \tau^{-\frac{2\zeta+1+\alpha}{2}}\right].
  \end{split}
\end{align}
The second summand in the last line of \eqref{eq:heatkernelalphasubordinatedboundsaux1} is $\sim1$ as desired. Since the first summand on the right-hand side of~\eqref{eq:heatkernelalphasubordinatedboundsaux1} is non-negative, we get that the left-hand side of~\eqref{eq:heatkernelalphasubordinatedboundsaux1} is bounded from below by the second summand on the right-hand side of~\eqref{eq:heatkernelalphasubordinatedboundsaux1}. On the other hand, by reflecting $\tau\mapsto\tau^{-1}$, the first summand on the right-hand side of~\eqref{eq:heatkernelalphasubordinatedboundsaux1} can be estimated by
\begin{align}
  \int_1^{\infty}d\tau\, \tau^{\frac{2\zeta-1+\alpha}{2}} \me{-\tau^{c_1}}
  \lesssim 1
\end{align}
from above. This gives the desired estimate. Now, we consider the second summand in \eqref{eq:heatkernelalphasubordinatedbounds}. For a lower bound, we drop it (like in the discussion of the lower bound for~\eqref{eq:heatkernelalphasubordinatedboundsaux1}). For an upper bound, we estimate
\begin{align}
  \label{eq:heatkernelalphasubordinatedboundsaux2}
  \begin{split}
    0 & \leq \int_{0}^{rs} \frac{d\tau}{\tau} \tau^{-\frac{1+\alpha}{2}} (rs)^{-\zeta}\, \me{-c((r-s)^2/\tau + 1/\tau^{c_1})}
        \lesssim (rs)^{-\zeta} \int_0^{rs} \frac{d\tau}{\tau} \tau^{-\frac{1+\alpha}{2}}\me{-c\tau^{-c_1}}\\
      & = (rs)^{-\zeta} \int_{(rs)^{-1}}^\infty \frac{d\tau}{\tau}\, \tau^{\frac{1+\alpha}{2}}\me{-c\tau^{c_1}}
        \asymp (rs)^{-\zeta} \cdot  (rs)^{-(\frac{\alpha+1}{2}-c_1)}\me{-c/(rs)^{c_1}}
        \lesssim 1,
  \end{split}
\end{align}
as desired. This concludes the case of $rs\vee(r-s)^2<1$.

\subsubsection{Case $(r-s)^2<1<rs$}

We show that $p_\zeta^{(\alpha)}(1,r,s)\sim(rs)^{-\zeta}$. We consider the first summand in \eqref{eq:heatkernelalphasubordinatedbounds}. For a lower bound we drop it, while for an upper bound we estimate
\begin{align}
  \label{eq:heatkernelalphasubordinatedboundsaux3}
  \begin{split}
    0 & \leq \int_{rs}^\infty \frac{d\tau}{\tau} \tau^{-\frac{2\zeta+1+\alpha}{2}} \me{-c(\frac{(r-s)^2}{\tau}+\tau^{-c_1})}
    \lesssim \int_{rs}^\infty \frac{d\tau}{\tau} \tau^{-\frac{2\zeta+1+\alpha}{2}} \\
      & \sim (rs)^{-\frac{2\zeta+1+\alpha}{2}}  \lesssim (rs)^{-\zeta}.
  \end{split}
\end{align}
We now consider the second summand in \eqref{eq:heatkernelalphasubordinatedbounds} and obtain
\begin{align}
  \label{eq:heatkernelalphasubordinatedboundsaux4}
  \begin{split}
    & \int_{0}^{rs} \frac{d\tau}{\tau} \tau^{-\frac{1+\alpha}{2}} (rs)^{-\zeta}\, \me{-c((r-s)^2/\tau+1/\tau^{c_1})} \\
    & \quad \asymp (rs)^{-\zeta} \left[\int_0^1 \frac{d\tau}{\tau} \tau^{-\frac{1+\alpha}{2}}\me{-c((r-s)^2/\tau+\tau^{-c_1})} + \int_1^{rs} \frac{d\tau}{\tau^{(3+\alpha)/2}}\right]
    \sim (rs)^{-\zeta}.
  \end{split}
\end{align}
This concludes the case of $(r-s)^2<1<rs$.

\subsubsection{Case $rs<1<(r-s)^2$}

We show that $p_\zeta^{(\alpha)}(1,r,s)\sim |r-s|^{-(2\zeta+1+\alpha)}$.
We first consider the second summand in \eqref{eq:heatkernelalphasubordinatedbounds}. For a lower bound we drop it, while for an upper bound we estimate
\begin{align}
  \label{eq:heatkernelalphasubordinatedboundsaux6}
  \begin{split}
    0 & \leq \int_{0}^{rs} \frac{d\tau}{\tau} \tau^{-\frac{1+\alpha}{2}} (rs)^{-\zeta}\, \me{-c(\frac{(r-s)^2}{\tau}+\frac{1}{\tau^{c_1}})}
    \lesssim (rs)^{-\zeta}\int_0^{rs} \frac{d\tau}{\tau} \tau^{-\frac{1+\alpha}{2}}\me{-c\frac{(r-s)^2}{\tau}} \\
    & = (rs)^{-\zeta} |r-s|^{-(1+\alpha)} \int_{(r-s)^2/(rs)}^\infty d\tau\, \tau^{\frac{\alpha-1}{2}} \me{-c\tau} \\
    & \asymp (rs)^{-\zeta} |r-s|^{-(1+\alpha)} \cdot \frac{|r-s|^{\alpha-1}}{(rs)^{(\alpha-1)/2}}\me{-c(r-s)^2/(rs)} \\
    & = |r-s|^{-(2\zeta+1+\alpha)} \cdot \frac{|r-s|^{2\zeta-1+\alpha}}{(rs)^{\frac{2\zeta-1+\alpha}{2}}}\me{-c\frac{(r-s)^2}{rs}}
    \lesssim |r-s|^{-(2\zeta+1+\alpha)}.
  \end{split}
\end{align}
To bound the first summand in \eqref{eq:heatkernelalphasubordinatedbounds}, we distinguish between $c_1\leq1$ and $c_1>1$. Consider first $c_1\leq1$. Then we have $\exp\left(-c((r-s)^2/\tau+1/\tau^{c_1})\right)\asymp\exp(-c(r-s)^2/\tau)$. In this case, we have the two-sided estimate
\begin{align}
  \label{eq:heatkernelalphasubordinatedboundsaux5}
  \begin{split}
    & \int_{rs}^\infty \frac{d\tau}{\tau} \tau^{-\zeta-\frac{1+\alpha}{2}} \me{-c(\frac{(r-s)^2}{\tau}+\frac{1}{\tau^{c_1}})}
      \asymp \int_{rs}^\infty \frac{d\tau}{\tau} \tau^{-\frac{2\zeta+1+\alpha}{2}} \me{-c\frac{(r-s)^2}{\tau}} \\
    & \quad \asymp \int_{rs}^{(r-s)^2} \frac{d\tau}{\tau} \tau^{-\frac{2\zeta+1+\alpha}{2}} \me{-c(r-s)^2/\tau} + \int_{(r-s)^2}^\infty \frac{d\tau}{\tau} \tau^{-\frac{2\zeta+1+\alpha}{2}} \\
    & \quad \asymp \frac{1}{|r-s|^{2\zeta+1+\alpha}} \left[\int_{rs/(r-s)^2}^1 \frac{d\tau}{\tau}\tau^{-\frac{2\zeta+1+\alpha}{2}}\me{-c/\tau} + 1 \right]
      \sim \frac{1}{|r-s|^{2\zeta+1+\alpha}}.
  \end{split}
\end{align}
In particular, this estimate suffices for an upper bound for all $c_1>0$ since $\me{-c/\tau^{c_1}}\leq1$. Thus, it remains to prove the lower bound when $c_1>1$, i.e., to find lower bounds for
\begin{align}
  \label{eq:heatkernelalphasubordinatedboundsaux5sub1}
  \int_0^\infty \frac{d\tau}{\tau}\, \tau^{-\frac{2\zeta+1+\alpha}{2}} \left[\! \exp\left(-c\frac{(r-s)^2}{\tau}\right)\one_{\tau>|r-s|^{2/(c_1-1)}} + \me{-c/\tau^{c_1}}\one_{\tau\in[rs,|r-s|^{2/(c_1-1)}]} \!\right]\!.
\end{align}
To this end, we distinguish $2/(c_1-1)\lessgtr2$. When $2/(c_1-1)\leq2$, we drop the second summand in \eqref{eq:heatkernelalphasubordinatedboundsaux5sub1} for a lower bound, while the first summand we treat as in the case $c_1\leq1$ in \eqref{eq:heatkernelalphasubordinatedboundsaux5}, by splitting the integral at $\tau=(r-s)^2$. The integral for $\tau<(r-s)^2$ can be dropped for a lower bound, while the integral for $\tau>(r-s)^2$ gives the desired contribution. Now consider $2/(c_1-1)>2$. In this case, we drop the first summand in \eqref{eq:heatkernelalphasubordinatedboundsaux5sub1} for a lower bound. On the other hand, we split the $\tau$-integration in the second summand at $\tau=(r-s)^2$, drop the contribution for $\tau<(r-s)^2$ for a lower bound, and estimate
\begin{align}
  \left[\int_{rs}^{(r-s)^2} \frac{d\tau}{\tau}\,\tau^{-\frac{2\zeta+1+\alpha}{2}}\me{-c/\tau^{c_1}} + \int_{(r-s)^2}^{(r-s)^{2/(c_1-1)}} \frac{d\tau}{\tau}\,\tau^{-\frac{2\zeta+1+\alpha}{2}} \right]
  \gtrsim \frac{1}{|r-s|^{2\zeta+1+\alpha}}.
\end{align}
This concludes the analysis of the case $rs<1<(r-s)^2$.

\subsubsection{Case $rs\wedge(r-s)^2>1$}

We show that 
\begin{align*}
  p_\zeta^{(\alpha)}(1,r,s) \sim |r-s|^{-(2\zeta+1+\alpha)} \one_{rs<(r-s)^2} + (rs)^{-\zeta}|r-s|^{-(1+\alpha)}\one_{rs>(r-s)^2}.
\end{align*}
Consider first $c_1\leq1$. Then, as before, we have $\exp\left(-c((r-s)^2/\tau+1/\tau^{c_1})\right)\asymp\exp(-c(r-s)^2/\tau)$. We consider the first summand in~\eqref{eq:heatkernelalphasubordinatedbounds} and estimate
\begin{align}
  \label{eq:heatkernelalphasubordinatedboundsaux7}
  \begin{split}
    & \int_{rs}^\infty \frac{d\tau}{\tau} \tau^{-\frac{2\zeta+1+\alpha}{2}} \me{-c((r-s)^2/\tau+1/\tau^{c_1})}
    \asymp \int_{rs}^\infty \frac{d\tau}{\tau} \tau^{-\frac{2\zeta+1+\alpha}{2}} \me{-c(r-s)^2/\tau} \\
    & \quad \asymp \one_{rs>(r-s)^2} \int_{rs}^\infty \frac{d\tau}{\tau} \tau^{-\frac{2\zeta+1+\alpha}{2}} \\
    & \qquad + \one_{rs<(r-s)^2} \left[\int_{rs}^{(r-s)^2}\frac{d\tau}{\tau} \tau^{-\frac{2\zeta+1+\alpha}{2}} \me{-c(r-s)^2/\tau} + \int_{(r-s)^2}^\infty \frac{d\tau}{\tau} \tau^{-\frac{2\zeta+1+\alpha}{2}}\right] \\
    & \quad \sim (rs)^{-\frac{2\zeta+1+\alpha}{2}}\one_{rs>(r-s)^2} + \frac{1}{|r-s|^{2\zeta+1+\alpha}}\one_{rs<(r-s)^2}.
  \end{split}
\end{align}
The second summand in this estimate is already in the desired form, while the first summand can be dropped for a lower bound and estimated from above by $(rs)^{-\zeta}|r-s|^{-1-\alpha}$ for the desired upper bound. We now consider the second summand in \eqref{eq:heatkernelalphasubordinatedbounds} and obtain
\begin{align}
  \label{eq:heatkernelalphasubordinatedboundsaux8}
  \begin{split}
    & (rs)^{-\zeta}\int_{0}^{rs} \frac{d\tau}{\tau} \tau^{-\frac{1+\alpha}{2}} \me{-c((r-s)^2/\tau+1/\tau^{c_1})}
    \sim (rs)^{-\zeta} \int_0^{rs} \frac{d\tau}{\tau} \tau^{-\frac{1+\alpha}{2}}\me{-(r-s)^2/\tau} \\
    & \quad \asymp \one_{rs>(r-s)^2}(rs)^{-\zeta}\left[\int_0^{(r-s)^2}\frac{d\tau}{\tau} \tau^{-\frac{1+\alpha}{2}} \me{-c(r-s)^2/\tau} + \int_{(r-s)^2}^{rs}\frac{d\tau}{\tau^{(3+\alpha)/2}}\right] \\
    & \qquad + \one_{rs<(r-s)^2}(rs)^{-\zeta}\int_0^{rs} \frac{d\tau}{\tau} \tau^{-\frac{1+\alpha}{2}} \me{-c(r-s)^2/\tau}\\
    & \quad \asymp \one_{rs>(r-s)^2}(rs)^{-\zeta}\left[\int_{(r-s)^{-2}}^\infty d\tau\, \tau^{\frac{\alpha-1}{2}} \me{-c\tau(r-s)^2} + |r-s|^{-(1+\alpha)}\right] \\
    & \qquad + \one_{rs<(r-s)^2}(rs)^{-\zeta} |r-s|^{-(1+\alpha)} \int_0^{rs/(r-s)^2} \frac{d\tau}{\tau} \tau^{-\frac{1+\alpha}{2}}\me{-c/\tau} \\
    & \quad \asymp (rs)^{-\zeta}\cdot |r-s|^{-(1+\alpha)} \left[\one_{rs>(r-s)^2} + \frac{|r-s|^{\alpha-1}}{(rs)^{(\alpha-1)/2}}\exp\left(-c\frac{(r-s)^2}{rs}\right)\one_{rs<(r-s)^2}\right].
  \end{split}
\end{align}
Comparing \eqref{eq:heatkernelalphasubordinatedboundsaux7} and \eqref{eq:heatkernelalphasubordinatedboundsaux8} for $rs>(r-s)^2$ shows that the term $(rs)^{-\zeta}\cdot|r-s|^{-(1+\alpha)}$ on the right-hand side of \eqref{eq:heatkernelalphasubordinatedboundsaux8} dominates the term $(rs)^{-(2\zeta+1+\alpha)/2}$ on the right-hand side of \eqref{eq:heatkernelalphasubordinatedboundsaux7} since $(rs)^{-\zeta}\cdot|r-s|^{-(1+\alpha)}\gtrsim (rs)^{-\frac{2\zeta+1+\alpha}{2}}$. On the other hand, if $rs<(r-s)^2$, then the $|r-s|^{-(2\zeta+1+\alpha)}$ term on the right-hand side of \eqref{eq:heatkernelalphasubordinatedboundsaux7} dominates the prefactor on the right-hand side of \eqref{eq:heatkernelalphasubordinatedboundsaux8} since
\begin{align}
  \begin{split}
    & \frac{|r-s|^{-2}}{(rs)^{\frac{2\zeta-1+\alpha}{2}}}\cdot\exp\left(-c\frac{(r-s)^2}{rs}\right)\one_{rs<(r-s)^2} \\
    & \quad = |r-s|^{-(2\zeta+1+\alpha)} \cdot \frac{|r-s|^{2\zeta-1+\alpha}}{(rs)^{\frac{2\zeta-1+\alpha}{2}}} \exp\left(-c\frac{(r-s)^2}{rs}\right)\one_{rs<(r-s)^2} \\
    & \quad \lesssim |r-s|^{-(2\zeta+1+\alpha)}.
  \end{split}
\end{align}
Thus, for $c_1\leq1$,
\begin{align}
  \begin{split}
    \eqref{eq:heatkernelalphasubordinatedbounds}
    & \sim (rs)^{-\zeta}|r-s|^{-1-\alpha}\one_{rs>(r-s)^2} + |r-s|^{-(2\zeta+1+\alpha)}\one_{rs<(r-s)^2},
  \end{split}
\end{align}
as needed. Now suppose $c_1>1$. Then the previous analysis suffices for the upper bound since $\me{-c/\tau^{c_1}}\leq1$. Thus, it suffices to prove the lower bound when $c_1>1$, i.e., it remains to find lower bounds for
\begin{align}
  \label{eq:heatkernelalphasubordinatedboundsaux7sub1}
  \begin{split}
    & \int_0^\infty \frac{d\tau}{\tau}\, \left[ \tau^{-\frac{2\zeta+1+\alpha}{2}} \left[\me{-c\frac{(r-s)^2}{\tau}}\one_{\tau>|r-s|^{2/(c_1-1)}\vee rs} + \me{-c/\tau^{c_1}}\one_{\tau\in[rs,|r-s|^{2/(c_1-1)}]}\right] \right. \\
    & \quad \left. + (rs)^{-\zeta} \tau^{-\frac{1+\alpha}{2}}\left[\me{-c\frac{(r-s)^2}{\tau}}\one_{|r-s|^{2/(c_1-1)}<\tau<rs} + \me{-c/\tau^{c_1}}\one_{\tau<rs\wedge |r-s|^{2/(c_1-1)}}\right]\right].
  \end{split}
\end{align}
When $rs<(r-s)^2$, we can argue exactly as in the previously analyzed case $rs<1<(r-s)^2$. Thus, we assume $(r-s)^2<rs$. In this case, we drop the first line in~\eqref{eq:heatkernelalphasubordinatedboundsaux7sub1} for a lower bound, i.e., it remains to control the second line. If $2/(c_1-1)\leq2$ then we drop the second summand in the second line of \eqref{eq:heatkernelalphasubordinatedboundsaux7sub1} and estimate the first summand there precisely as in \eqref{eq:heatkernelalphasubordinatedboundsaux8} by splitting the $\tau$-integral at $\tau=(r-s)^2$ and dropping the integral over $\{\tau<(r-s)^2\}$. On the other hand, if $2/(c_1-1)>2$, then we drop the first summand in the second line of \eqref{eq:heatkernelalphasubordinatedboundsaux7sub1}. The second summand is bounded from below by
\begin{align}
  & (rs)^{-\zeta} \int_0^\infty \frac{d\tau}{\tau}\, \tau^{-\frac{1+\alpha}{2}} \me{-c/\tau^{c_1}}\one_{(r-s)^2<\tau<rs\wedge |r-s|^{2/(c_1-1)}}
    \gtrsim (rs)^{-\zeta} |r-s|^{-(1+\alpha)}.
\end{align}
This concludes the analysis of the case $rs\wedge(r-s)^2>1$ and thereby the proof of the bounds in \eqref{eq:heatkernelalpha1subordinatedboundsfinalaux1}.

\subsection{Proof of \eqref{eq:heatkernelalpha1weightedsubordinatedboundsfinal}}\label{A.2}

The bounds \eqref{eq:heatkernelalpha1subordinatedboundsfinalaux1} imply
\begin{align}
  \begin{split}
    \label{eq:heatkernelalpha1subordinatedboundsfinalaux2}
    p_\zeta^{(\alpha)}(1,r,s)
    & \sim_{\zeta,\alpha} \left(1+rs\right)^{\zeta}\one_{(r-s)^2<1} + |r-s|^{-(2\zeta+1+\alpha)}\one_{rs<1<(r-s)^2} \\
    & \quad + \left(|r-s|^{-2\zeta}\one_{1<rs<(r-s)^2}+ (rs)^{-\zeta}\one_{rs>(r-s)^2>1}\right)|r-s|^{-(1+\alpha)}.
  \end{split}
\end{align}
Using \eqref{eq:heatkernelalpha1subordinatedboundsfinalaux2}, we now show the desired bound \eqref{eq:heatkernelalpha1weightedsubordinatedboundsfinal}, i.e.,
\begin{align*}
  \begin{split}
    \label{eq:heatkernelalpha1subordinatedboundsfinal}
    p_\zeta^{(\alpha)}(1,r,s)
    & \sim_{\zeta,\alpha} \frac{1}{|r-s|^{1+\alpha}\cdot(r+s)^{2\zeta} + (1+r+s)^{2\zeta}}
  \end{split}
\end{align*}
for all $r,s>0$. To that end, we distinguish the cases $(r-s)^2\lessgtr1$.
\smallskip\\
(1) If $(r-s)^2<1$, then, by $2rs=r^2+s^2-(r-s)^2$, we have $p_\zeta^{(\alpha)}(1,r,s)\sim (1+rs)^{-\zeta}\sim(1+r+s)^{-2\zeta}$. This gives the claimed estimate for $p_\zeta^{(\alpha)}(1,r,s)$. Note that we use $|r-s|^{1+\alpha}(r+s)^{2\zeta}\leq(r+s)^{2\zeta+1+\alpha}\lesssim1$ for $r+s\lesssim1$ and $\zeta>-1/2$.
\\
(2) If $(r-s)^2>1$, then $|r-s|^{1+\alpha}(r+s)^{2\zeta}\gtrsim(1+r+s)^{2\zeta}$. We now distinguish the cases $rs\lessgtr(r-s)^2$ and use $(r+s)^2=(r-s)^2+4rs$. If $(r-s)^2>rs$, then the claimed estimate for $p_\zeta^{(\alpha)}(1,r,s)$ follows immediately. If $(r-s)^2<rs$, then $|r-s|^{1+\alpha}(rs)^{\zeta} \sim |r-s|^{1+\alpha}(r+s)^{2\zeta}$ as desired.



\def\cprime{$'$}

\end{document}